\title[Incompressible hypersurfaces stably in 4-manifolds]{Stable existence of incompressible\\ 3-manifolds in 4-manifolds}
\author[Q Khan]{Qayum Khan}
\address{Department of Mathematics \hfill Indiana University \hfill Bloomington IN 47405 USA}
\email{qkhan@indiana.edu}
\author[G Smith]{Gerrit Smith}
\address{Department of Mathematics \hfill Iowa State University \hfill Ames IA 50011 USA}
\email{gnsmith@iastate.edu}
\definecolor{dark-red}{rgb}{0.4,0.15,0.15}
\definecolor{dark-blue}{rgb}{0.15,0.15,0.4}
\definecolor{medium-blue}{rgb}{0,0,0.5}
\newtheorem{thm}{Theorem}[section]
\newtheorem{cor}[thm]{Corollary}
\newtheorem{lem}[thm]{Lemma}
\newtheorem{prop}[thm]{Proposition}
\theoremstyle{definition}
\newtheorem{defn}[thm]{Definition}
\numberwithin{equation}{section}
\newcommand{\C}{\mathbb{C}}
\newcommand{\R}{\mathbb{R}}
\newcommand{\Z}{\mathbb{Z}}
\newcommand{\ko}{\mathfrak{o}}
\newcommand{\G}{\Gamma}
\newcommand{\Cl}{\mathrm{Cl}}
\newcommand{\const}{\mathrm{const}}
\newcommand{\Fr}{\mathrm{Fr}}
\newcommand{\hofib}{\mathrm{hofib}}
\newcommand{\id}{\mathrm{id}}
\newcommand{\Ker}{\mathrm{Ker}}
\newcommand{\proj}{\mathrm{proj}}
\newcommand{\Spin}{\mathrm{Spin}}
\newcommand{\Sq}{\mathrm{Sq}}
\newcommand{\bdry}{\partial}
\newcommand{\diffeo}{\thickapprox}
\newcommand{\iso}{\cong}
\newcommand{\longra}{\longrightarrow}
\newcommand{\gens}[1]{\left\langle #1 \right\rangle}
\newcommand{\Kronecker}[2]{\langle #1, #2 \rangle}
\newcommand{\ol}[1]{\overline{#1}}
\newcommand{\ul}[1]{\underline{#1}}
\newcommand{\wh}[1]{\widehat{#1}}
\newcommand{\wt}[1]{\widetilde{#1}}
\begin{document}

\begin{abstract}
Given a separating embedded connected 3-manifold in a closed 4-manifold, the Seifert--van Kampen theorem implies that the fundamental group of the 4-manifold is an amalgamated product along the fundamental group of the 3-manifold.
In the other direction, given a closed 4-manifold whose fundamental group admits an injective amalgamated product structure along the fundamental group of a 3-manifold, is there a corresponding geometric-topological decomposition of the 4-manifold in a stable sense?
We find an algebraic-topological splitting criterion in terms of the orientation classes and universal covers.
Also, we equivariantly generalize the Lickorish--Wallace theorem to regular covers.
\end{abstract}
\maketitle

\section{Introduction}

In this paper, we examine the correspondence between algebraic topology and the stable geometric topology of 4-dimensional manifolds.
Two 4-manifolds are \textbf{stably equivalent} if they become diffeomorphic after forming the connected sum with finitely many copies of $S^2 \times S^2$.
Note this does not change the fundamental group, signature, or spin of a 4-manifold, but does change the second Betti number.
As in stable homotopy theory, computations are more tractable and still distinguish many spaces.
The Whitney trick, which in higher dimensions allows for mirroring between topology and algebra, cannot be used in 4-manifolds since a disc may intersect itself.
By stabilization of the 4-manifold, self-intersections of a disc may be removed, by a modification called the Norman trick \cite[2.1]{CS}.

The Kneser conjecture in 3-manifold topology states that if the fundamental group of a closed 3-manifold $X$ is a free product $G_- * G_+$, then $X \cong X_- \# X_+$ where $X_{\pm}$ have fundamental group $G_{\pm}$ respectively.
This conjecture was proved by J~Stallings in his dissertation (see \cite[1.B.3, 2.B.3]{Stallings}).  Later, C~Feustel \cite{Feustel} and G\,A~Swarup \cite{Swarup} proved a generalized version of the conjecture when the fundamental group admits an injective amalgamated product along a surface group.

Following Hillman's work \cite{Hillman} on the 4-dimensional version of the Kneser conjecture, Kreck--L\"uck--Teichner proved the 4-dimensional conjecture is false \cite{KLT_counterexample} but is true if one allows for stabilization \cite{KLT}.
We investigate the problem of stably realizing injective amalgamated product decompositions of the fundamental group of a 4-manifold via separating embedded codimension-one submanifolds.

\subsection{Bistable results}

Our results on stable embeddings vary according to the so-called \emph{$w_2$-type} of the 4-manifold.
So we first consider a weaker equivalence relation.
We call 4-manifolds \textbf{bistably diffeomorphic} if they become diffeomorphic after connecting sum each with finitely many copies of the complex-projective plane $\C P_2$ (nonspin) and its orientation-reversal $\ol{\C P_2}$.
For any oriented 4-manifold $X$, denote
\[
X(r) ~:=~ X \# r(S^2 \times S^2) \qquad\qquad
X(a,b) ~:=~ X \# a(\C P_2) \# b(\ol{\C P_2}).
\]
Stable implies bistable, as $(S^2 \times S^2) \# (\C P_2) \diffeo 2 (\C P_2) \# (\ol{\C P_2})$ \cite[Cor~1,Lem~1]{Wall}.

Given a nonempty connected CW-complex $A$, by a continuous map $u: A \longra B\G$ \textbf{classifying the universal cover} $\wt{A}$, we mean the induced map $u_\#$ on fundamental groups is an isomorphism, for some basepoints.
The map $u$ is uniquely determined up to homotopy and composition with self-homotopy equivalences $B\alpha: B\G \longra B\G$ for $\alpha$ an automorphism of $\G$.
By a connected subcomplex being \textbf{incompressible}, we shall mean that the inclusion induces a monomorphism on fundamental groups.

\begin{thm}\label{thm:main}
Let $X$ be a oriented closed smooth 4-manifold.
Let $c: X \longra BG$ classify its universal cover.
Let $X_0$ be a connected oriented closed 3-manifold with fundamental group $G_0$.
Suppose $G = G_- *_{G_0} G_+$ with $G_0 \subset G_\pm$.
There exists an incompressible embedding of $X_0$ in some bistabilization $X(a,b)$ inducing the given injective amalgamation of fundamental groups,
if and only if there exists a map $d: X_0 \longra BG_0$ classifying its universal cover and satisfying the equation
\begin{equation}\label{eqn:main}
d_*[X_0] ~=~ \bdry c_*[X] ~\in~ H_3(G_0;\Z),
\end{equation}
with $\bdry$ the boundary in a Mayer--Vietoris sequence in group homology~\cite[III:6a]{Brown}.
\end{thm}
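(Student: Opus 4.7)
The argument will split into the two directions of the biconditional, joined by the Mayer--Vietoris sequence
\[
H_4(BG) \xrightarrow{\bdry} H_3(BG_0) \longra H_3(BG_-) \oplus H_3(BG_+)
\]
associated to the pushout $BG \simeq BG_- \cup_{BG_0} BG_+$; that this pushout is actually a $K(G,1)$ follows from standard Bass--Serre theory using the hypothesized injections $G_0 \hookrightarrow G_\pm$.

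For the necessary direction, suppose $j: X_0 \hookrightarrow X(a,b)$ is an incompressible embedding realizing the prescribed amalgam. Cutting $X(a,b)$ along $X_0$ produces pieces $X_\pm$ with $\pi_1(X_\pm) = G_\pm$; their classifying maps $X_\pm \longra BG_\pm$ assemble through the pushout into a map $X(a,b) \longra BG$ homotopic to the classifying map of $X(a,b)$, whose restriction along $j$ provides the desired $d$. Naturality of Mayer--Vietoris then yields $d_*[X_0] = \bdry c_*[X(a,b)]$, and because the stabilizing summands $\C P_2$ and $\ol{\C P_2}$ are simply connected their classifying maps to $BG$ are null-homotopic, so that $c_*[X(a,b)] = c_*[X]$, proving \eqref{eqn:main}.

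For the sufficient direction I would build a geometric model $Y \simeq BG$ in which $BG_0$ sits bicollared as a codimension-one subcomplex, and homotope $c: X \longra Y$ to a map transverse to $BG_0$ in the sense of Quinn. The preimage $W := c^{-1}(BG_0)$ is then a closed oriented 3-submanifold of $X$, and hypothesis \eqref{eqn:main} identifies its fundamental class in $H_3(BG_0)$ with $d_*[X_0]$ modulo the Mayer--Vietoris kernel $\Ker(\bdry)$. The remaining task is to modify $W$ within a suitable bistabilization $X(a,b)$, by ambient surgeries, until it becomes an incompressible copy of $X_0$ with the prescribed amalgam structure.

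The principal obstacle will be this final modification step: converting the a priori poorly controlled preimage $W$ into a specific diffeomorphic copy of $X_0$. I would attack it in two stages. First, use ambient 2-handle surgery in $X$ to eliminate extraneous components of $W$ and to render $\pi_1 W \longra G_0$ an isomorphism, absorbing any framing defects into $\C P_2$ and $\ol{\C P_2}$ connect-summands of the ambient manifold. Second, invoke the paper's equivariant Lickorish--Wallace theorem on the regular $G_0$-cover of $W$ to realize $X_0$ from this simplified $W$ via a finite sequence of framed link surgeries, each performed ambiently at the cost of further bistabilization. Careful tracking of the orientation classes through every move will preserve the equation \eqref{eqn:main} throughout.
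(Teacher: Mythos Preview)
Your outline tracks the paper's strategy: transversality to produce a preimage 3-manifold $M$, ambient moves to improve it, the equivariant Lickorish--Wallace lemma (Lemma~\ref{lem:LWmaps}) to obtain a 2-handle bordism $(V,e)$ from $(M,c_0)$ to $(X_0,d)$, then ambient realization at the cost of stabilization. The necessity direction is likewise as in the paper.

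The genuine gap is your Stage~2: you assert that the link surgeries can be ``performed ambiently at the cost of further bistabilization'' but supply no mechanism, and this is the crux of the proof. The paper forms the 5-dimensional trace $T = X\times[0,1] \cup_{M\times[-1,1]} V\times[-1,1]$ and then argues: since $c_\#$ is an isomorphism one has $\ker(c_{0\#}) = \ker(i_\#)$ for $i\colon M \hookrightarrow X$; the link components $L_k$ lie in $\ker(c_{0\#})$ (because $V$ is built from $M$ by 2-handles along $L$ and $e_\#$ is an isomorphism), so each $L_k$ is null-homotopic in $X$ and bounds an embedded disc there after finger moves. Capping the 2-handle cores in $V\times\{0\}$ with these discs yields embedded 2-spheres $S_k \subset \mathring{T}$ whose closed tubular neighborhoods $N_k$ are $D^3$-bundles over $S^2$, giving $T \diffeo (X\times[0,1]) \natural \bigsqcup_k N_k$ and hence $X' \diffeo X \# p(S^2\times S^2) \# q(S^2\rtimes S^2)$, which is bistably $X$ by Wall's identities. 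Without this null-homotopy-and-sphere argument there is no reason an arbitrary link surgery on $M\subset X$ should cost only bistabilization of the ambient $X$. Two smaller points: in Stage~1 the paper (Proposition~\ref{prop:epi}) arranges only $\pi_1$-\emph{surjectivity} of $c_0$, via handle exchanges that do not alter $X$ at all---making it an isomorphism is unnecessary and would itself require the same disc-bounding argument you are deferring; and your ``framing defects'' properly belong to Stage~2, arising precisely as the dichotomy $N_k \diffeo D^3\times S^2$ versus $D^3\rtimes S^2$.
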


The simplest case of $G_0=1$ was done transparently by J~Hillman \cite{Hillman}, whose hands-on approach with direct manipulation of handles we generalize in this paper.

\begin{cor}[Hillman]\label{cor:Hillman}
Let $X$ be a connected orientable closed smooth 4-manifold whose fundamental group is a free product $G_- * G_+$.
Some bistabilization $X(a,b)$ is diffeomorphic to a connected sum $X_- \# X_+$ with $X_\pm$ having fundamental group $G_\pm$ respectively.
\end{cor}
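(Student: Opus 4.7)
The plan is to derive this as the $G_0 = 1$, $X_0 = S^3$ special case of Theorem~\ref{thm:main}. First I would verify the hypotheses: the free product $G = G_- * G_+$ is tautologically the amalgam $G_- *_{\{1\}} G_+$ with $\{1\} \subset G_\pm$; the oriented closed 3-manifold $S^3$ has trivial fundamental group, matching $G_0$; and the unique constant map $d : S^3 \to B\{1\}$ classifies its universal cover.

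Next I would check equation \eqref{eqn:main}. Because $B\{1\}$ is contractible, $H_3(\{1\};\Z) = 0$, so both sides of the equation are forced to vanish and \eqref{eqn:main} holds automatically, independent of $X$ or its classifying map $c$. Theorem~\ref{thm:main} then produces an incompressible embedding $j : S^3 \hookrightarrow X(a,b)$ for some $a,b \geq 0$, realizing the given free-product decomposition on $\pi_1$.

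The remaining work is to promote this embedded $S^3$ to a connected-sum splitting. Since $X(a,b)$ is oriented, $j(S^3)$ is two-sided; because the amalgamation is a genuine free product rather than an HNN extension, $j(S^3)$ must separate $X(a,b)$ into compact codimension-zero pieces $Y_\pm$ with boundary $S^3$. Capping each $Y_\pm$ with a 4-ball yields closed 4-manifolds $X_\pm := Y_\pm \cup_{S^3} D^4$ satisfying $X(a,b) \diffeo X_- \# X_+$, and van~Kampen, combined with the amalgamation that $j$ induces, gives $\pi_1(X_\pm) \iso G_\pm$. The one subtlety is the separation of $j(S^3)$: this should be read off from the Bass--Serre/graph-of-groups picture implicit in the word ``inducing'', and it can also be checked directly by noting that a non-separating $S^3$ would create a $\Z$ summand in $H_1(X(a,b))$ that is absent from the free product $G_- * G_+$.
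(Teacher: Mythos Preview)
Your approach is exactly the paper's: take $G_0=1$, $X_0=S^3$, $d$ constant, observe $H_3(1)=0$, and invoke Theorem~\ref{thm:main}. The paper's proof is a single line and leaves the passage from ``incompressible embedded $S^3$'' to ``connected sum'' implicit; that passage is indeed routine, and in fact the proof of Theorem~\ref{thm:main} already builds the explicit splitting $X' = X'_- \cup_{X_0} X'_+$ with each $c'_\pm: X'_\pm \to BG_\pm$ a $\pi_1$-isomorphism, so no extra argument is really needed once one looks inside that proof.

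One correction to your third paragraph: your primary reason for separation is the right one --- the phrase ``inducing the given injective amalgamation'' forces the splitting to be an amalgam over $G_0=1$, not an HNN extension, hence $j(S^3)$ separates. But your backup argument (``a non-separating $S^3$ would create a $\Z$ summand in $H_1$ that is absent from $G_-*G_+$'') does not work in general: $H_1(G_-*G_+) = H_1(G_-)\oplus H_1(G_+)$ can certainly contain $\Z$ summands (e.g.\ $G_-=\Z$), so homology alone does not rule out non-separation. Drop that clause and rely on the Bass--Serre reason.
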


Similarly, when $G_0=\Z$, note $X$ is bistably diffeomorphic to some $X_- \cup_{S^1 \times S^2} X_+$.

\begin{proof}
Here $G_0=1$, hence $H_3(G_0) = 0$.
Take $X_0 = S^3$ and $d$ the constant map.
\end{proof}

The proof of Theorem~\ref{thm:main} generalizes Hillman's strategy for proving Corollary~\ref{cor:Hillman} and employs an equivariant generalization of the Lickorish--Wallace theorem (\S\ref{sec:LWmaps}).
Theorem~\ref{thm:LWmaps} is a bordism version that slides 1-handles then does Wallace's trick.
Wallace's proof of Corollary~\ref{cor:LW} relied upon the Rohlin--Thom theorem ($\Omega^{SO}_3=0$).

\subsection{Stable results}

Shortly after Hillman's result, Kreck--L\"uck--Teichner offered an alternative proof, using Kreck's machinery of modified surgery theory \cite{Kreck}.
They were able to replace bistabilization with stabilization, due to a careful analysis of $w_2$-types and triviality of 3-plane bundles over embedded 2-spheres in certain 5-dimensional cobordisms \cite{KLT}.
In general, stabilization is required \cite{KLT_counterexample}.
Regarding the removal of $S^2 \times S^2$ factors (destabilization), see \cite{HK} and \cite{Khan}.

Recall that a \textbf{(stable) spin structure} $s$ on a smooth oriented manifold $M$ is a homotopy-commutative diagram (reduction of structure groups in \cite[II:1.3]{LM}):
\[\xymatrix{
& B\Spin \ar[d]\\
M \ar[r]_-{\tau_M} \ar[ur]^-{s} & BSO.
}\]

\begin{thm}[totally nonspin]\label{thm:totallynonspin}
Let $X$ be a oriented closed smooth 4-manifold whose universal cover has no spin structure.
Let $c: X \longra BG$ classify this cover.
Let $X_0$ be a connected oriented closed 3-manifold with fundamental group $G_0$.
Suppose $G = G_- *_{G_0} G_+$ with $G_0 \subset G_\pm$.
There exists an incompressible embedding of $X_0$ in some $X(r)$ inducing the given injective amalgam of fundamental groups,
if and only if there is $d: X_0 \longra BG_0$ classifying its universal cover satisfying \eqref{eqn:main}.
\end{thm}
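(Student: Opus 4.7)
The plan is to deduce Theorem \ref{thm:totallynonspin} directly from the bistable Theorem \ref{thm:main}, combined with a trading argument that uses the hypothesis $w_2(\wt X) \neq 0$ to replace complex-projective summands by $(S^2 \times S^2)$ summands. The reverse implication is easy: any stable embedding $X_0 \hookrightarrow X(r)$ is \emph{a fortiori} an embedding into a bistabilization, so Theorem \ref{thm:main} yields the required classifying map $d$ satisfying \eqref{eqn:main}.

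For the forward direction, I would first invoke Theorem \ref{thm:main} to produce an incompressible embedding $j: X_0 \hookrightarrow X(a,b)$ realizing the amalgam. Next, using Wall's identity $(S^2 \times S^2) \# \C P_2 \diffeo 2\C P_2 \# \ol{\C P_2}$ \cite{Wall}, performed inside small balls lying in the complement of $j(X_0)$, I would iteratively trade $\C P_2 \# \ol{\C P_2}$ pairs into $(S^2 \times S^2)$ summands, reducing to the case where only pure $\C P_2$ (or only pure $\ol{\C P_2}$) summands remain alongside the accumulated $(S^2 \times S^2)$ summands. The incompressible copy of $X_0$ is preserved throughout, since every trade occurs away from $j(X_0)$.

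The crux is then to eliminate the residual pure projective summands. Since $\wt X$ admits no spin structure, $w_2(\wt X) \neq 0$, so there is a class $\sigma \in \pi_2(X) = \pi_2(\wt X)$ with $\Kronecker{w_2(X)}{h(\sigma)} \neq 0$ under the Hurewicz map $h$; such $\sigma$ automatically lies in $\ker(c_*: \pi_2 X \to \pi_2 BG)$ since $\pi_2 BG = 0$. Following the $w_2$-type analysis of Kreck--L\"uck--Teichner \cite{KLT}, after further $(S^2 \times S^2)$-stabilization the class $\sigma$ is represented by an embedded 2-sphere of odd self-intersection whose normal 3-plane bundle is trivial; a diffeomorphism supported in a tubular neighbourhood of such a sphere converts a $\C P_2$ summand into a $\ol{\C P_2}$ summand, thereby enabling Wall's identity to absorb the residual projective factors into $(S^2 \times S^2)$ summands and completing the upgrade from $X(a,b)$ to some $X(r)$.

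The main obstacle is arranging the sphere representing $\sigma$ to lie entirely in one complementary piece $Y_\pm$ of $X(a,b) \setminus \nu(j(X_0))$ with fundamental-group behaviour compatible with the amalgam $G = G_- *_{G_0} G_+$, so that the monomorphism $\pi_1(X_0) \hookrightarrow \pi_1(X(r))$ survives every trade. This amounts to a relative version of the Kreck--L\"uck--Teichner analysis over the normal 1-types of the pairs $(Y_\pm, X_0)$, with compatibility governed by the Mayer--Vietoris sequence of the amalgam. Verifying that the sphere-trading can be carried out \emph{rel} $j(X_0)$ without disturbing the injective amalgamation is, I expect, the technical heart of the proof.
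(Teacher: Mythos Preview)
Your approach founders on signature. If the \emph{statement} of Theorem~\ref{thm:main} hands you an embedding into $X(a,b)$ with $a\neq b$, then $\sigma(X(a,b))=\sigma(X)+a-b\neq\sigma(X)=\sigma(X(r))$, so no chain of diffeomorphisms or further $(S^2\times S^2)$-stabilizations can ever reach any $X(r)$. In particular, the claim that a diffeomorphism ``converts a $\C P_2$ summand into a $\ol{\C P_2}$ summand'' is impossible; and the trade of a $\C P_2\#\ol{\C P_2}$ pair for an $S^2\times S^2$ is false as a stand-alone identity (odd versus even intersection form)---Wall only gives $Y\#(\C P_2\#\ol{\C P_2})\diffeo Y\#(S^2\times S^2)$ when $Y$ already carries a sphere of odd square, and that odd sphere persists. (The phrase ``normal $3$-plane bundle'' also makes no sense for a $2$-sphere embedded in a $4$-manifold.)

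Your instinct to exploit a spherical class $\alpha$ with $\Kronecker{w_2(X)}{\alpha_*[S^2]}=1$ is exactly right; the paper applies it not at the $4$-manifold level but by re-entering the \emph{proof} of Theorem~\ref{thm:main}, inside the $5$-dimensional cobordism $T=X\times[0,1]\cup_{M\times[-1,1]} V\times[-1,1]$. There one has embedded $2$-spheres $S_k\subset\mathring T$, one per $2$-handle of $V$, whose tubular neighbourhoods are $D^3\times S^2$ or $D^3\rtimes S^2$ according to $\Kronecker{w_2(T)}{S_{k*}[S^2]}\in\Z/2$. Whenever this class is $1$, replace the southern hemisphere of $S_k$ (which lies in $X\times\{1\}$) by its one-point union with $\alpha$, smoothed to an embedding by finger-moves; the new sphere $S_k'$ has $\Kronecker{w_2(T)}{S'_{k*}[S^2]}=1+1=0$, hence trivial normal $3$-plane bundle in $T$. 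Now every neighbourhood is $D^3\times S^2$, so $T\diffeo(X\times[0,1])\natural\bigsqcup_k(D^3\times S^2)$ and the upper boundary is $X(r)$ on the nose. The ``main obstacle'' you anticipate never arises: the modification takes place entirely in $X\times\{1\}$, away from the incompressible copy of $X_0$ sitting at the top of $T$, so no rel-$X_0$ bookkeeping is needed.
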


Any oriented manifold has a spin structure if and only if $w_2$ of its tangent bundle vanishes \cite{LM}.
So any oriented 3-manifold has a spin structure (as $w_2=v_2=0$).

\begin{thm}[spinnable]\label{thm:spin}
Let $X$ be a oriented closed smooth 4-manifold that admits some spin structure.
Let $c: X \longra BG$ classify the universal cover.
Let $X_0$ be a connected oriented closed 3-manifold with fundamental group $G_0$.
Suppose $G = G_- *_{G_0} G_+$ with $G_0 \subset G_\pm$.
There exists an incompressible embedding of $X_0$ in a stabilization $X(r)$ inducing the given injective amalgam of fundamental groups,
if and only if there exist a map $d: X_0 \longra BG_0$ classifying its universal cover and spin structures $s$ on $X$ and $t$ on $X_0$ satisfying:
\begin{equation}\label{eqn:spin}
[X_0,t,d] ~=~ \bdry [X,s,c] ~\in~ \Omega^\Spin_3(BG_0),
\end{equation}
with $\bdry$ the boundary map in a Mayer--Vietoris sequence in spin bordism \cite[5.7]{CF}.
\end{thm}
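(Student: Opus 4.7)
For necessity, given an incompressible embedding $\iota: X_0 \hookrightarrow X(r)$ inducing the prescribed amalgam, I would extend the spin structure $s$ and classifying map $c$ across the spin, simply connected summands $S^2 \times S^2$ to obtain $s'$ and $c': X(r) \longra BG$. Homotope $c'$ to be transverse to $BG_0 \subset BG$ with preimage exactly $\iota(X_0)$; this is possible by incompressibility together with the pushout structure $BG = BG_- \cup_{BG_0} BG_+$. Setting $t := s'|\iota(X_0)$ and $d := c'|\iota(X_0)$, the geometric definition of the Mayer--Vietoris $\bdry$ in spin bordism \cite[5.7]{CF} then yields $[X_0, t, d] = \bdry[X(r), s', c'] = \bdry[X, s, c]$, the last equality holding since the $S^2 \times S^2$ factors contribute trivially to spin bordism over $BG$.

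For sufficiency, the plan has two steps. First, since the forgetful homomorphism $\Omega^{\Spin}_3(BG_0) \longra H_3(G_0;\Z)$ commutes with the Mayer--Vietoris $\bdry$, equation \eqref{eqn:spin} implies equation \eqref{eqn:main}; Theorem~\ref{thm:main} then furnishes some bistabilization $X(a, b)$ with an incompressible embedding $X_0 \hookrightarrow X(a, b) = X_- \cup_{X_0} X_+$ realizing the amalgam. (This first step discards all spin data.) Second, the spin bordism hypothesis unpacks geometrically as spin 4-manifolds $W_\pm$ with $\bdry W_\pm = (X_0, t)$, classified over $BG_\pm$ and extending $d$, whose closed union $Y := W_- \cup_{X_0} W_+$ represents the same class as $(X, s, c)$ in $\Omega^{\Spin}_4(BG)$. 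Invoking Kreck's stable classification of closed spin 4-manifolds with matching normal 1-type \cite{Kreck, KLT}, for some $r$ there is a diffeomorphism $Y(r) \diffeo X(r)$ respecting orientations, spin structures, and classifying data. Transporting the incompressible embedding $X_0 \subset Y \subset Y(r)$ (after absorbing the $r$ stabilizations into the interior of $W_+$) across this diffeomorphism produces an incompressible embedding inside $X(r)$; Van Kampen applied to the transported decomposition recovers the amalgam $G = G_- *_{G_0} G_+$.

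The main obstacle is verifying that Kreck's hypotheses are actually met in step two. The common normal 1-type of $X$ and $Y$ should be the pullback $BG \times_{BSO} B\Spin$, and one needs $[Y, s_Y, c_Y] = [X, s, c]$ in the bordism group over this 1-type, not merely over $BSO$ or $BG$ separately. This refines the purely homological argument of Theorem~\ref{thm:main} with the fine structure of the spin bordism Mayer--Vietoris sequence, paralleling the $w_2$-type analysis and 3-plane bundle triviality arguments that distinguish the spinnable case from Theorem~\ref{thm:totallynonspin} and that \cite{KLT} carry out in the $G_0 = 1$ setting. A secondary technical issue is realizing the Mayer--Vietoris boundary data as honest 4-manifolds $W_\pm$ with the prescribed boundary spin framing along $X_0$; this is a standard unwinding of the Čech-style geometric $\bdry$, but the compatibility with the extension of $d$ across $W_\pm$ must be checked carefully.
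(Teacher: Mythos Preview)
Your sufficiency argument takes a genuinely different route from the paper's, and in its present form has a real gap.

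The paper never invokes Kreck's stable classification. It reruns the proof of Theorem~\ref{thm:main} with spin data carried along: after Proposition~\ref{prop:epi} arranges $M$ connected with $c_0$ a $\pi_1$-epimorphism, Lemma~\ref{lem:LWmaps_spin} (in place of Lemma~\ref{lem:LWmaps}) produces a \emph{spin} bordism $V$ over $BG_0$ from $(M,s|M,c_0)$ to $(X_0,t,d)$ built from 2-handles only. The 5-dimensional trace $T = X\times[0,1]\cup_{M\times[-1,1]} V\times[-1,1]$ then inherits a spin structure by gluing the one on $X\times[0,1]$ coming from $s$ to the one on $V\times[-1,1]$ along $M$. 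Since $w_2(T)=0$, the normal 3-plane bundle of each embedded 2-sphere $S_k\subset\mathring T$ is trivial, hence $T\diffeo (X\times[0,1])\natural\bigl(\bigsqcup_k D^3\times S^2\bigr)$ and $X'\diffeo X(r)$ immediately. The paper explicitly avoids modified surgery and the stable $s$-cobordism theorem.

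In your plan, Step~1 is a dead end: the bistabilization $X(a,b)$ from Theorem~\ref{thm:main} is never used again, and its pieces $X_\pm$ need not be spin, so they cannot feed into Step~2. The core assertion of Step~2---that hypothesis~\eqref{eqn:spin} ``unpacks geometrically as spin 4-manifolds $W_\pm$ with $\bdry W_\pm=(X_0,t)$ over $BG_\pm$ whose union $Y$ represents $[X,s,c]$''---is not what \eqref{eqn:spin} gives. The hypothesis yields only a spin bordism $V$ over $BG_0$ from $M$ to $X_0$; arbitrary spin nullbordisms $W_\pm$ of $X_0$ over $BG_\pm$ will \emph{not} in general satisfy $[W_-\cup_{X_0}W_+]=[X,s,c]$. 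The only way to force this equality is to set $W_\pm:=X_\pm\cup_M V$ after splitting $X$ along $M$, which is precisely the paper's $X'_\pm$, and the witnessing spin bordism from $X$ to $Y$ is precisely the spin $T$ above. At that point invoking Kreck is redundant: the handle structure of $T$ already delivers $X'\diffeo X(r)$. Even if you prefer the Kreck route, you would still owe the verification that $c_Y:Y\to BG$ is a $\pi_1$-isomorphism (needed for the normal 1-smoothing), which the paper supplies via the alternating-word argument inside the proof of Theorem~\ref{thm:main}.
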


Observe that \eqref{eqn:spin} is a lift of \eqref{eqn:main}, via the cobordism-Hurewicz homomorphism
\[
\Omega^\Spin_3(BG_0) \xrightarrow{~epi~} \Omega^{SO}_3(BG_0) \xrightarrow{~iso~} H_3(BG_0).
\]

Finally, we generalize Theorem~\ref{thm:spin} to only require that $\wt{X}$ admits a spin structure.
In order to understand the more delicate criterion, we state a lemma and definition.

\begin{lem}\label{lem:prespin}
Let $u: Y \longra B\G$ classify the universal cover of an oriented connected smooth manifold $Y$.
The universal cover $\wt{Y}$ admits a spin structure if and only if there is a class $w_2^u \in H^2(\G;\Z/2)$ satisfying the equation $w_2(TY) = u^*(w_2^u)$.
When such a class exists it is unique.
\end{lem}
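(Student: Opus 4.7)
The plan is to reduce the spin condition on $\wt{Y}$ to an exact-sequence question in degree-two $\Z/2$-cohomology, and then invoke the Serre spectral sequence of the fibration sequence $\wt{Y} \to Y \xrightarrow{u} B\G$ whose existence is guaranteed by $u$ classifying the universal cover.

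First, I note that the covering map $p:\wt{Y}\to Y$ is a local diffeomorphism, so $T\wt{Y} = p^*(TY)$ and therefore $w_2(T\wt{Y}) = p^* w_2(TY)$. Since an oriented smooth manifold is spinnable iff its second Stiefel--Whitney class vanishes, the universal cover $\wt{Y}$ admits a spin structure iff $p^* w_2(TY) = 0$. So it suffices to prove that
\[
\Ker\bigl(p^*: H^2(Y;\Z/2) \longra H^2(\wt{Y};\Z/2)\bigr) ~=~ \mathrm{image}\bigl(u^*: H^2(\G;\Z/2) \longra H^2(Y;\Z/2)\bigr),
\]
and that $u^*$ is injective in degree two.

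Next, because $u$ classifies the universal cover, the principal $\G$-bundle $p: \wt{Y} \to Y$ is the pullback of the universal $\G$-bundle $E\G \to B\G$ along $u$; equivalently, $p$ is (up to weak equivalence) the homotopy fiber of $u$. The Serre spectral sequence of this fibration,
\[
E_2^{r,q} ~=~ H^r(\G; H^q(\wt{Y};\Z/2)) ~\Longrightarrow~ H^{r+q}(Y;\Z/2),
\]
has vanishing $E_2^{0,1}$-term since $H^1(\wt{Y};\Z/2) = 0$ by simple connectedness of $\wt{Y}$. Consequently the transgression $d_2$ arriving at $E_2^{2,0} = H^2(\G;\Z/2)$ is zero, and the five-term edge sequence yields the exact sequence
\[
0 \longra H^2(\G;\Z/2) \xrightarrow{u^*} H^2(Y;\Z/2) \xrightarrow{p^*} H^2(\wt{Y};\Z/2)^{\G}.
\]

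Finally, the kernel of $p^*: H^2(Y;\Z/2) \to H^2(\wt{Y};\Z/2)$ agrees with the kernel of its corestriction to $\G$-invariants (since any class in $\Ker(p^*)$ is automatically invariant), so the above exact sequence supplies both the existence of a lift $w_2^u$ of $w_2(TY)$ (whenever $p^* w_2(TY) = 0$) and its uniqueness (from injectivity of $u^*$). I foresee no serious obstacle; the only subtlety is the $\G$-action on fiber cohomology in the spectral sequence, but this is sidestepped through degree two by the vanishing of $H^1(\wt{Y};\Z/2)$.
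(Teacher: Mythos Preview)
Your proof is correct and follows essentially the same approach as the paper: both arguments use the Leray--Serre spectral sequence of the fibration $\wt{Y}\to Y\xrightarrow{u}B\G$ together with $H^1(\wt{Y};\Z/2)=0$ to obtain the exact sequence $0\to H^2(\G;\Z/2)\xrightarrow{u^*}H^2(Y;\Z/2)\xrightarrow{p^*}H^2(\wt{Y};\Z/2)^\G$, and then apply it to $w_2(TY)$ via $w_2(T\wt{Y})=p^*w_2(TY)$. Your write-up simply spells out the edge-sequence reasoning in a bit more detail than the paper does.
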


The secondary characteristic class $w_2^u$ will vanish if $Y$ admits a spin structure.
The following definition is rather delicate due to two explicit choices of homotopies.
For $H: A \times [0,1] \longra B$ and $a \in A$, the \textbf{$a$-track} is $H^a := (t \longmapsto H(a,t)) \in B^{[0,1]}$.

\begin{defn}\label{defn:inducedspin}
Let $Y$ be an oriented connected smooth manifold whose universal cover $\wt{Y}$ admits a spin structure.
Let $u: Y \longra B\G$ classify the universal cover.
Fix homotopy representatives $\tau_Y : Y \longra BSO, w_2 : BSO \longra K(\Z/2,2), w_2^u : B\G \longra K(\Z/2,2)$.
By Lemma~\ref{lem:prespin}, there is a homotopy $\eta$ from $w_2 \circ \tau_Y$ to $w_2^u \circ u$.
Suppose $\G = \G_- *_{\G_0} \G_+$ with $\G_0 \subset \G_\pm$.
Write $i_0: \G_0 \longra \G$ for the inclusion homomorphism.
Assume $Bi_0: B\G_0 \longra B\G$ is the inclusion of a bicollared subspace, with $u$ transverse to $B\G_0$.
If there exists a nulhomotopy $\theta$ of the map $w_2^u \circ Bi_0$, then we define the \emph{induced spin structure} $s_\eta^\theta$ on the submanifold $N := u^{-1}(B\G_0)$ of $Y$ by
\[
s_\eta^\theta : N \longra B\Spin ~;~ x \longmapsto \left( \tau_Y(x), \eta^{x} * \theta^{u(x)} \right),
\]
where we identify $B\Spin$ with the homotopy fiber of $w_2$ and $*$ denotes join of paths.
\end{defn}

We arrive at a generalization of Theorem~\ref{thm:spin} which further requires $i_0^*(w_2^c)=0$.

\begin{thm}[pre-spinnable]\label{thm:prespin}
Let $X$ be a oriented closed smooth 4-manifold whose universal cover admits a spin structure.
Let $c: X \longra BG$ classify this cover.
Let $X_0$ be a connected oriented closed 3-manifold with fundamental group $G_0$.
Suppose $G = G_- *_{G_0} G_+$ with $G_0 \subset G_\pm$.
There exists an incompressible embedding of $X_0$ in some $X(r)$ inducing the given injective amalgam of fundamental groups,
if and only if there exist a map $d: X_0 \longra BG_0$ classifying its universal cover and a spin structure $t$ on $X_0$ and a nulhomotopy $\theta$ of $w_2^c \circ Bi_0$ satisfying, with $M := c^{-1}(BG_0)$:
\begin{equation}\label{eqn:prespin}
[X_0,t,d] ~=~ \left[ M,s_\eta^\theta,c|M \right] ~\in~ \Omega^\Spin_3(BG_0).
\end{equation}
\end{thm}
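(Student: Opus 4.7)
The plan is to closely parallel the proof of Theorem~\ref{thm:spin}, with the induced spin structure $s_\eta^\theta$ on $M = c^{-1}(BG_0)$ taking over the role of the restricted spin structure $s|M$ from the honestly spin case.

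For necessity, given an incompressible embedding $X_0 \hookrightarrow X(r)$ inducing the given amalgam, I would first extend $c$ to $c': X(r) \longra BG$ and homotope it to be transverse to $BG_0$ with $c'^{-1}(BG_0) = X_0$, setting $d := c' | X_0$. The universal cover $\wt{X(r)}$ still admits a spin structure (each $S^2 \times S^2$ summand is spin), so Lemma~\ref{lem:prespin} furnishes the secondary class $w_2^{c'} = w_2^c$. To extract a nulhomotopy $\theta$ of $w_2^c \circ Bi_0$, I would observe that $d_0^*(i_0^* w_2^c) = w_2(TX_0) = 0$ and appeal to the Serre spectral sequence of $d_0: X_0 \longra BG_0$: since $H^1(\wt{X_0};\Z/2) = 0$, the edge map $d_0^*$ is injective on $H^2(-;\Z/2)$, so $i_0^* w_2^c = 0$ and $\theta$ exists. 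Taking $t := s_\eta^\theta$ for this $\theta$, equation~\eqref{eqn:prespin} holds tautologically since $M = X_0$ with matching data.

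For sufficiency, given $(d, t, \theta)$ satisfying \eqref{eqn:prespin}, I would pick a compact spin bordism $(W^4, \sigma)$ from $(X_0, t)$ to $(M, s_\eta^\theta)$ with a compatible reference map $W \longra BG_0$. Decomposing $X = X_- \cup_M X_+$ along $M$ and inserting the double $V := W \cup_{X_0} (-W)$ yields
\[
X' ~:=~ X_- \cup_M V \cup_M X_+,
\]
which embeds $X_0$ as an incompressible bicollared slice inducing the required amalgam. Because $\sigma$ extends the induced spin structure $s_\eta^\theta$ across $V$, the universal cover $\wt{X'}$ also admits a spin structure whose secondary class is still $w_2^c$. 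To identify $X'$ with a stabilization $X(r)$, I would apply the equivariant Lickorish--Wallace theorem (Theorem~\ref{thm:LWmaps}) to realize $W$ by ambient framed surgeries inside stabilizations of $X$, showing that $V$ is stably $S^2 \times S^2$-cobordant to the trivial cylinder $M \times I$ rel boundary; Kreck's modified surgery then identifies $X'$ with $X(r)$ for some $r$, since the two share $\pi_1 = G$, zero signature contribution from $V$, and the normal $2$-type determined by the pair $(c, w_2^c)$.

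The principal obstacle is verifying that the induced spin structure $s_\eta^\theta$ behaves coherently under these gluings, so that the choices $(\eta, \theta)$ on $M$ patch with $\sigma$ on $W$ and the ambient homotopy data on $X_\pm$ to yield a single normal 2-smoothing of $X'$ in Kreck's bordism framework, agreeing up to homotopy with one on $X(r)$. The uniqueness clause in Lemma~\ref{lem:prespin} is what organizes this coherence; once it is in hand, the remainder of the argument reduces to bookkeeping parallel to Theorem~\ref{thm:spin}, with induced spin structures substituted for genuine ones throughout.
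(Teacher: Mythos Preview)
Your necessity sketch contains a genuine gap. You write ``equation~\eqref{eqn:prespin} holds tautologically since $M = X_0$'', but $M := c^{-1}(BG_0)$ lives in $X$, whereas $X_0$ is embedded in the stabilization $X(r)$; these are different 3-manifolds in general. What is required is a spin bordism over $BG_0$ from $(M, s_\eta^\theta, c|M)$ to $(X_0, t, d)$, and that bordism must carry an induced spin structure in the sense of Definition~\ref{defn:inducedspin}. The paper obtains it by forming the 5-manifold $W := X[r] \cup_{X(r)} (X(r)\times[1,2])$---the canonical cobordism from $X$ to $X(r)$ followed by the track of a homotopy from $c\circ k$ to your $c'$---making the combined map $C: W \to BG$ transverse to $BG_0$, and setting $V := C^{-1}(BG_0)$. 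A homotopy $\mu$ from $w_2\circ\tau_W$ to $w_2^c\circ C$ (available since $T\wt{W}\iso \wt{K}^*T\wt{X}\oplus\ul{\R}$) then produces an induced spin structure $s_\mu^\theta$ on $V$ restricting to $s_\eta^\theta$ on $M$; the spin structure $t$ on $X_0$ is \emph{defined} as the other boundary restriction, not taken to be $s_\eta^\theta$ itself. Your derivation of $i_0^*(w_2^c)=0$ is correct and matches the paper.

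Your sufficiency sketch follows a genuinely different route. The paper neither doubles the bordism nor invokes Kreck's modified surgery; instead it follows the proof of Theorem~\ref{thm:main} to build the 5-dimensional trace $T = X\times[0,1]\cup_{M\times[-1,1]} V\times[-1,1]$ from a spin bordism $(V,\Sigma,e)$ with only 2-handles (Lemma~\ref{lem:LWmaps_spin}), and then shows that the universal cover $\wt{T}$ admits a spin structure by patching the homotopy $\eta$ on $X$ with $\xi^x := \sigma(x) * \overline{\theta^{e(x)}}$ on $V$ into a single homotopy from $w_2\circ\tau_T$ to $w_2^c\circ D$, and applying Lemma~\ref{lem:prespin}. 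The payoff is that although $T$ itself need not be spin, each embedded 2-sphere $S_k$ lifts to $\wt{T}$, so $\Kronecker{w_2(T)}{S_{k*}[S^2]} = \Kronecker{w_2(\wt{T})}{\wt{S_k}_*[S^2]} = 0$, forcing the normal 3-plane bundles to be trivial and hence $X' \diffeo X(r)$ directly. Your appeal to Kreck's machine is exactly what the paper set out to avoid; moreover your doubled manifold $W\cup_{X_0}(-W)$ is not clearly stably a product rel boundary, nor have you arranged $W$ to have only 2-handles, without which the fundamental-group claims for $X'$ remain unjustified.
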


The special case \cite{KLT} is a consequence of Theorems~\ref{thm:totallynonspin} and \ref{thm:prespin}.

\begin{cor}[Kreck--L\"uck--Teichner]\label{cor:KLT}
Let $X$ be a nonempty connected orientable closed smooth 4-manifold whose fundamental group is a free product $G_- * G_+$.
Some $X(r)$ is diffeomorphic to a sum $X_- \# X_+$ with each $X_\pm$ of fundamental group $G_\pm$.
\end{cor}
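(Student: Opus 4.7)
The plan is to specialize Theorems~\ref{thm:totallynonspin} and \ref{thm:prespin} to the case $G_0 = 1$, with $X_0 = S^3$ and $d$ the constant map, and then dichotomize on whether the universal cover $\wt{X}$ admits a spin structure. Orient $X$ arbitrarily. Since $\pi_1(X) = G_- * G_+ = G_- *_1 G_+$ is the given amalgam, both hypotheses on the amalgamation are satisfied automatically.

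If $\wt{X}$ admits no spin structure, I would invoke Theorem~\ref{thm:totallynonspin}. Equation~\eqref{eqn:main} lives in $H_3(1;\Z) = 0$, so it holds trivially. The theorem then produces an incompressible embedding of $S^3$ in some $X(r)$ realizing the free product amalgamation on $\pi_1$. If instead $\wt{X}$ admits a spin structure, I would invoke Theorem~\ref{thm:prespin} with $t$ the unique spin structure on $S^3$. Equation~\eqref{eqn:prespin} lives in $\Omega^\Spin_3(\mathrm{pt}) = \Omega^\Spin_3 = 0$, so it is automatic; meanwhile, the required nulhomotopy $\theta$ of $w_2^c \circ Bi_0$ exists trivially because $BG_0$ is contractible (so any map out of it to $K(\Z/2,2)$ is nulhomotopic). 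Again we get an incompressible embedding of $S^3$ in some $X(r)$ inducing the given amalgamation. The two cases are mutually exclusive and exhaustive.

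Having obtained the incompressible $S^3 \subset X(r)$, cutting along it yields a decomposition $X(r) = X_-' \cup_{S^3} X_+'$ into two compact smooth 4-manifolds with boundary $S^3$. Capping each boundary with a standard $D^4$ gives closed 4-manifolds $X_\pm$ with $X(r) \diffeo X_- \# X_+$. Since the embedding $S^3 \hookrightarrow X(r)$ is arranged to induce the given amalgamation $G_- * G_+$, a Seifert--van~Kampen computation applied to each side identifies $\pi_1(X_\pm) = G_\pm$.

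I anticipate no substantive obstacle: the entire content of the corollary sits inside the two main theorems, which collapse cleanly once $G_0 = 1$ makes $H_3(G_0)$ and $\Omega^\Spin_3(BG_0)$ vanish and renders $BG_0$ contractible. The only subtlety worth flagging is the spin-case nulhomotopy $\theta$, which might otherwise look like an obstruction to verify but is vacuous here precisely because $G_0 = 1$. The proof is thus a direct case split followed by a standard cut-and-cap argument.
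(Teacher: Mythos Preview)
Your proposal is correct and follows the same approach as the paper: the paper's own proof is the one-liner ``Here $G_0=1$, so $H_3(G_0)=0=\Omega^\Spin_3(BG_0)$. Take $X_0=S^3$, $d$ constant,'' invoking exactly the dichotomy between Theorems~\ref{thm:totallynonspin} and~\ref{thm:prespin} that you spell out. Your additional remarks on the nulhomotopy $\theta$ and the cut-and-cap step are correct elaborations of what the paper leaves implicit.
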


\begin{proof}
Here $G_0=1$, so $H_3(G_0) = 0 = \Omega^\Spin_3(BG_0)$.
Take $X_0 = S^3$, $d$ constant.
\end{proof}

Albeit that Kreck's modified surgery theory \cite{Kreck} is a powerful formalism, by which we were inspired and against which we checked our progress, we sought to write this paper from first principles, to be accessible to low-dimensional topologists.
In particular, we avoid `subtraction of solid tori' and `stable $s$-cobordism theorem.'

\section{Surgery on a link and regular covers}\label{sec:LWmaps}

We generalize the notion of classifying a universal cover.
For a nonempty connected CW-complex $A$, a continuous map $u: A \longra B\G$ \textbf{classifies a regular cover} means that the induced map $u_\#$ on fundamental groups is an epimorphism, for a choice of basepoints.
The (connected) regular cover $\wh{A}$ corresponds to the kernel of $u_\#$, and its covering group is identified with $\G$, which acts transitively on the fibers.

\subsection{Oriented version}
This development is used to prove Theorems~\ref{thm:main} and \ref{thm:totallynonspin}.

\begin{thm}\label{thm:LWmaps}
Let $M$ and $M'$ be connected oriented closed 3-manifolds.
Let $f: M \longra B\G$ and $f': M' \longra B\G$ classify regular covers.
Then there exists a framed oriented link $L$ in $M$ that transforms $(M,f)$ into $(M',f')$ by surgery if and only if
\begin{equation}\label{eqn:LWmaps}
f_*[M] ~=~ f'_*[M'] ~\in~ H_3(\G;\Z).
\end{equation}
\end{thm}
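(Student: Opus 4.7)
The plan is to unify both directions of the equivalence through oriented 4-dimensional bordisms: necessity is witnessed by the trace of surgery, while sufficiency reduces via the cobordism-Hurewicz isomorphism $\Omega^{SO}_3(B\G) \iso H_3(B\G;\Z)$ to a handle-theoretic rearrangement of such a bordism. This isomorphism follows from Rohlin--Thom's vanishing $\Omega^{SO}_j = 0$ for $j=1,2,3$ together with the low-degree collapse of the Atiyah--Hirzebruch spectral sequence.

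Necessity is direct. Suppose $L \subset M$ is a framed oriented link whose surgery transforms $(M,f)$ to $(M',f')$. Each component of $L$ must lie in $\Ker(f_\#)$ so that $f'$ can be defined as a map classifying a regular cover of $M'$ with the same deck group $\G$; equivalently, $f$ extends over every 2-handle of the trace $W$ of the surgery. This produces a compact oriented bordism $(W,F)$ from $(M,f)$ to $(M',f')$, yielding $f_*[M]=f'_*[M']$ in $H_3(B\G;\Z)$.

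For sufficiency, the Hurewicz isomorphism applied to \eqref{eqn:LWmaps} produces a compact oriented 4-dimensional bordism $(W,F)$ from $(M,f)$ to $(M',f')$; we may assume $W$ is connected by tubing any extra components to the main one via trivial 1-handles. The goal is to replace $W$ by an equivalent bordism $W''$ built from $M \times [0,1]$ by attaching only 2-handles, so that their attaching circles form the required framed oriented link $L \subset M$. After choosing a handle decomposition of $W$ on $M \times [0,1]$, one cancels all 0-handles against 1-handles (using connectedness of $W$ and $M$) and dually all 4-handles against 3-handles (using connectedness of $M'$). For each remaining 1-handle, first slide 1-handles to simplify their attaching data, then apply Wallace's trick: introduce a trivially cancelling $(2,3)$-handle pair in the interior of $W$ and slide the new 2-handle across the 1-handle so that they become a cancellable pair. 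Iterating, and performing the dual operation on the reversed bordism $\ol{W}$ from $M'$ to $M$ to eliminate the resulting 3-handles, produces $W''$. Throughout, the map to $B\G$ extends, since handle slides preserve its homotopy class and the introduced cancelling pairs live in small 4-balls that are nulhomotopic in $B\G$.

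The main obstacle is the simultaneous suppression of 1- and 3-handles. Wallace's trick trades a 1-handle for a 3-handle (and dually a 3-handle for a 1-handle), so applied naively to each side it merely cycles between $(1,2)$- and $(2,3)$-handle decompositions of $W$. The slides preceding Wallace's trick must be arranged so that each newly produced 3-handle caps off an embedded 2-sphere bounding a 3-ball in the intermediate top boundary, making it topologically inert and hence removable; this careful coordination---the content of the phrase ``slides 1-handles then does Wallace's trick'' in the introduction---will form the technical core of the proof. One must in addition verify that the attaching circles of the 2-handles in the final $W''$ map nulhomotopically to $B\G$, so that the induced surgery genuinely preserves the regular-cover structure.
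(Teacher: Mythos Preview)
Your overall architecture is right: the cobordism-Hurewicz isomorphism reduces \eqref{eqn:LWmaps} to the existence of an oriented bordism $(W,F)$ over $B\G$, and the task is to rebuild $W$ with only 2-handles. The gap is in your mechanism for eliminating 1-handles.

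The version of Wallace's trick you describe---introduce a cancelling $(2,3)$-pair, slide the new 2-handle over the 1-handle, cancel the $(1,2)$-pair---does trade each 1-handle for a 3-handle, and you correctly diagnose that iterating this from both ends cycles. Your proposed remedy, arranging the slides so the new 3-handle is attached along a 2-sphere bounding a 3-ball, is not substantiated: there is no reason to expect such a 3-ball to exist in the intermediate level, and even if it did you would then need a 4-handle to cancel it, reintroducing what you already removed. As stated, the argument does not close.

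The paper avoids this entirely by a different move that exploits the self-duality of 2-handles in a 4-dimensional cobordism. Isolate a single 1-handle $h$ as a sub-cobordism $A$ from $M$ to some $N$. Choose an arc $\alpha$ in $M$ joining the feet of $h$; here the $\pi_1$-surjectivity of $f$ is used to concatenate $\alpha$ with a loop $\beta^{-1}$ in $M$ so that the resulting circle $\lambda := (\text{core of }h)*\alpha$, pushed into $N$, satisfies $F_\#[\lambda]=0$. Attach a 2-handle along $\lambda$ to form $W_1:N\to N'$; the map $F$ extends over $W_1$ by the nulhomotopy. By handle cancellation $A\cup_N W_1\cong M\times[0,2]$, so $N'\cong M$ via some $\delta$. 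Now set $A':=M\times[0,2]\cup_\delta\ol{W_1}$: since $4-2=2$, the flipped $\ol{W_1}$ is again a 2-handle trace, so $A'$ is a cobordism from $M$ to $N$ built from a single 2-handle, over $B\G$, replacing $A$. No 3-handle is ever introduced. Iterating over all 1-handles relative to $M$, then over all 1-handles relative to $M'$ (i.e.\ 3-handles relative to $M$), produces a bordism with only 2-handles from either side---precisely because each replacement inserts a 2-handle, which is a 2-handle from \emph{both} ends.

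This is the content of the paper's Lemma~\ref{lem:LWmaps}, and it is exactly the ``slide 1-handles then do Wallace's trick'' alluded to: the slide is the $\beta^{-1}$-adjustment making $\lambda$ nulhomotopic over $B\G$, and Wallace's trick is the cancellation-and-flip that turns a 1-handle trace into a 2-handle trace. Your sketch gestures at both ingredients but wires them together incorrectly; the $(2,3)$-pair formulation is the wrong primitive here.
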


In other words, this is an algebraic-topological criterion for whether or not there is a link in $M$ whose preimage in $\wh{M}$ has a $\G$-equivariant surgery resulting in $\wh{M'}$.

The original version is simply without reference maps; see \cite{Wallace} and \cite{Lickorish}.

\begin{cor}[Lickorish--Wallace]\label{cor:LW}
Any nonempty connected oriented closed 3-manifold $N$ is the result of surgery on a framed oriented link $L$ in the 3-sphere.
\end{cor}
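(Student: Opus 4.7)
The plan is to deduce Corollary~\ref{cor:LW} as a direct specialization of Theorem~\ref{thm:LWmaps}, with the ambient group taken to be trivial so that all of the cohomological data in the hypothesis collapses to nothing. Specifically, I would apply Theorem~\ref{thm:LWmaps} with $\G = 1$, $M = S^3$, $M' = N$, and with $f$ and $f'$ the constant maps to the point $B\G$.

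First I would verify the hypotheses. Both $S^3$ and $N$ are connected oriented closed 3-manifolds by assumption. The constant maps $f: S^3 \longra B\G$ and $f': N \longra B\G$ classify regular covers in the sense of the paper, since the induced homomorphisms on $\pi_1$ target the trivial group and are therefore trivially epimorphisms; the corresponding regular covers are just $S^3$ and the universal cover $\wt{N}$, respectively.

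Next I would observe that the compatibility equation \eqref{eqn:LWmaps} is vacuous in this setting: $H_3(\G;\Z) = H_3(\mathrm{pt};\Z) = 0$, so $f_*[S^3] = 0 = f'_*[N]$ automatically. Applying Theorem~\ref{thm:LWmaps} then produces a framed oriented link $L \subset S^3$ whose surgery transforms $(S^3,f)$ into $(N,f')$; forgetting the reference maps, this is exactly the statement that $N$ is obtained from $S^3$ by surgery on a framed oriented link.

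There is essentially no obstacle here: the reduction is purely a matter of choosing the trivial group so that the group-homology obstruction disappears, and the subsequent appeal to Theorem~\ref{thm:LWmaps} is immediate. The substantive work lies entirely in Theorem~\ref{thm:LWmaps} itself, which is proved separately in Section~\ref{sec:LWmaps}.
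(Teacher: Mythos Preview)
Your proposal is correct and matches the paper's own proof exactly: take $\G=1$, $M=S^3$, $M'=N$, note $B\G$ is a point so $H_3(\G)=0$, and apply Theorem~\ref{thm:LWmaps}. One inconsequential slip: the regular cover of $N$ classified by the constant map to $B1$ is the trivial cover $N\to N$ (the kernel of $f'_\#$ is all of $\pi_1(N)$), not the universal cover---but this does not affect the argument.
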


Lickorish also obtained each component is unknotted with $\pm 1$ Dehn coefficients.

\begin{proof}
Here $\G=1, M=S^3, M'=N$.
Note $B\G$ is a point, hence $H_3(\G)=0$.
\end{proof}

Here is a more general, technical version of Theorem~\ref{thm:LWmaps} that we shall use later.

\begin{lem}\label{lem:LWmaps}
Let $M$ and $M'$ be connected oriented closed 3-manifolds, and let $B$ be a connected CW-complex.
Suppose $f: M \longra B$ and $f': M' \longra B$ are continuous maps that induce epimorphisms on fundamental groups, for some basepoints.
Then
\begin{equation}\label{eqn:LWmaps_B}
f_*[M] ~=~ f'_*[M'] ~\in~ H_3(B;\Z) 
\end{equation}
if and only if there is a 4-dimensional smooth connected oriented compact bordism
\[
(F;f,f') ~:~ (W;M,M') ~\longra~ (B \times [0,1]; B \times \{0\}, B \times \{1\})
\]
such that $W$ has no 1-handles with respect to $M$ and no 1-handles with respect to $M'$, for a certain handle decomposition of the 4-dimensional cobordism $(W;M,M')$.
\end{lem}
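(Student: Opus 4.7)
The ``only if'' direction is immediate: given the bordism, $F_*[\bdry W] = f_*[M] - f'_*[M'] = 0 \in H_3(B; \Z)$ because $\bdry W$ bounds $W$.

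For the ``if'' direction, my plan is the following. First, I would apply the Atiyah--Hirzebruch spectral sequence: the vanishings $\Omega_1^{SO} = \Omega_2^{SO} = \Omega_3^{SO} = 0$ produce an isomorphism $\Omega_3^{SO}(B) \iso H_3(B; \Z)$ sending $[N, g] \longmapsto g_*[N]$. Hence the equation $f_*[M] = f'_*[M']$ provides a smooth compact oriented 4-bordism $W$ from $M$ to $M'$ with a continuous $F: W \longra B$ matching $f, f'$ on the boundary, which I lift to $B \times [0,1]$ via a collar function on $W$. Interior 1-handles make $W$ connected. Next, I would promote the end inclusions to $\pi_1$-epimorphisms: $\pi_1(W)$ is finitely generated, hence so is the quotient by the image of $\pi_1(M)$; each generator is represented by a loop in $\ker(\pi_1(W) \to \pi_1(B))$ (using surjectivity of $f_\#$), which I kill by attaching a 2-handle to the interior of $W$, and $F$ extends across the null-homotopic attaching loops. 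The same procedure applies starting from $M'$.

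Once these reductions are in place, I would choose a self-indexing Morse function on $W$ to obtain a handle decomposition built on $M \times [0,1]$, with no 0- or 4-handles (by connectedness of $W, M, M'$). For each 1-handle $h^1$, the surjectivity $\pi_1(M) \longra \pi_1(W)$ implies that the loop formed by the core arc of $h^1$ and a path in $M$ between its feet is homotopic in $W$ to a loop $\gamma \subset M$; a generic nullhomotopy yields an embedded disk in $W$, from whose boundary I extract an embedded 2-sphere meeting the belt sphere of $h^1$ transversely in one point. Introducing a cancelling $(2,3)$-handle pair along this sphere and then cancelling the resulting $(1,2)$-pair trades $h^1$ for a new 3-handle. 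Dually, I would use $\pi_1(M') \longra \pi_1(W)$ surjective to eliminate each 3-handle (a 1-handle from the $M'$ side).

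The hard part will be the interaction of these two trading procedures: each 1-handle traded from the $M$ side introduces a new 3-handle, and the dual trading to remove that 3-handle (as a 1-handle from the $M'$ side) in turn introduces a new 1-handle from the $M$ side, threatening an infinite regress. A coordinated execution will be required---arranging the handle-slide data so that each newly introduced handle already admits a cancelling partner from the opposite end---so that the process terminates with a decomposition of $W$ having only 2-handles.
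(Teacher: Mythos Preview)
Your reduction via the Atiyah--Hirzebruch spectral sequence and your preparatory interior surgeries to make both boundary inclusions $\pi_1$-surjective are fine (the latter is in fact unnecessary, as the paper only ever uses surjectivity of $f_\#:\pi_1(M)\to\pi_1(B)$). The genuine gap is in your handle-trading step, and the infinite regress you anticipate is a symptom of it rather than a separate difficulty to be managed.

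First, the assertion that ``a generic nullhomotopy yields an embedded disk in $W$'' is false when $\dim W=4$: a generic map of a $2$-disk into a $4$-manifold is only an immersion with isolated transverse double points, and eliminating them is exactly the Whitney-trick problem. Without that embedded disk you cannot realize the cancelling $(2,3)$-pair \emph{inside} the given cobordism, so the classical $1\to 3$ trade is unavailable here; the paper itself flags this, remarking that the Rourke--Sanderson replacement of $1$-handles by $3$-handles applies only when $\dim M>3$. Second, even if the trade succeeded, a $3$-handle relative to $M$ is a $1$-handle relative to $M'$, so the two procedures genuinely undo each other; no bookkeeping of cancelling partners rescues this.

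The paper's proof avoids both obstacles by a different exchange: rather than trading a $1$-handle for a $3$-handle inside $W$, it trades it for a \emph{$2$-handle} by attaching a new $2$-handle \emph{externally} along a framed loop $\lambda$ in the top level $M'$. This $\lambda$ is the core of $h^1$ pushed up to $M'$ concatenated with an arc coming from $M$, where the arc has been adjusted---using only the hypothesis that $f_\#:\pi_1(M)\to\pi_1(B)$ is onto---so that $F\circ\lambda$ is nullhomotopic in $B$ and the reference map extends over the new handle. The attached $2$-handle geometrically cancels $h^1$, and one then glues on the reversed trace $\overline{W_1}$ to restore $M'$ as the outgoing boundary. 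No embedded disk inside $W$ is required, and because $2$-handles are self-dual in a $4$-dimensional cobordism, running the procedure first from the $M$ side and then from the $M'$ side introduces only $2$-handles from either viewpoint---so there is no regress. This is Wallace's trick upgraded to carry the map to $B$, and it is the idea your plan is missing.
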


\begin{proof}[Proof of Theorem~\ref{thm:LWmaps}]
By Lemma~\ref{lem:LWmaps}, use only 2-handles: surger along a link.
\end{proof}

The argument below, after the preliminary three paragraphs, can be perceived in two geometric steps, even though it is combined into a single surgical move.
The first step is to \emph{slide 1-handles}, along with the map data, so that they become trivial.
The second step is a \emph{reference-maps version} of Wallace's trick to exchange oriented 1-handles for trivial 2-handles \cite[5.1]{Wallace}.
(If $\dim M > 3$, see \cite[6.15]{RS} and subsequent remark to replace 1-handles for 3-handles in certain cobordisms on $M$.)

\begin{proof}[Proof of Lemma~\ref{lem:LWmaps}]
$\Longleftarrow$ is due to $\Omega^{SO}_3(B) \iso H_3(B)$.
Consider the $\Longrightarrow$ direction.

Clearly $\Omega^{SO}_0=\Z$ and $\Omega^{SO}_1=\Omega^{SO}_2=0$; recall that $\Omega^{SO}_3=0$ by Rohlin--Thom \cite[IV.13]{Thom}.
Then note, for the CW-complex $B$, by the Atiyah--Hirzebruch spectral sequence, that the cobordism-Hurewicz map is an isomorphism:
\[
\Omega^{SO}_3(B) ~\longra~ H_3(B) \quad ; \quad [M,f: M \to B] ~\longmapsto~ f_*[M].
\]
Thus the criterion \eqref{eqn:LWmaps_B} transforms into the equation: $[M,f] = [M',f'] \in \Omega^{SO}_3(B)$.
In other words, there exists a 4-dimensional smooth oriented compact bordism
\[
(F_0;f,f') ~:~ (W_0;M,M') ~\longra~ (B \times [0,1]; B \times \{0\}, B \times \{1\}).
\]

Since $M$ and $M'$ are connected, by joining their two possibly different components in $W_0$ via connected sum and ignoring the rest, we may assume that $W_0$ is connected.
Hence, in the handle decomposition of a Morse function $(W_0;M,M') \longra ([0,1];\{0\},\{1\})$, $W_0$ has no 0-handles with respect to $M$ and no 0-handles with respect to $M'$.
Therefore, it remains to eliminate the 1-handles of $W_0$ with respect to $M$ and $M'$.
For simplicity of notation, we assume that $W_0$ has a single 1-handle.

Let $h: (D^1 \times D^3, S^0 \times D^3) \longra (W_0, M)$ be the 1-handle, preserving orientation.
Since $M$ is connected, there is a path $\alpha_0: [-1,1] \longra M$ with $\alpha_0(\pm 1) = h(\mp 1,0)$.
Concatenation yields a loop $\beta_0 = h(-,0) * \alpha_0: S^1 \longra W_0$.
Since $f_\#$ is an epimorphism, there exists a loop $\beta: (S^1,1) \longra (M,\alpha_0(1))$ such that $f_\#[\beta]=F_{0\#}[\beta_0]$.
By general position, there is a normally framed embedded arc $\alpha_1: [-1,1] \times D^2 \longra M$ such that $\alpha_1(\pm 1,0) = h(\mp 1,0)$ and $\alpha_1(-,0)$ is homotopic rel boundary to $\alpha_0 * \beta^{-1}$.

Push off the 1-handle core $h(-,0)$ to obtain a normally framed embedded arc $h_0: [-1,1] \times D^2 \longra M'$.
A matching isotopy takes $\alpha_1$ to $\alpha'_1: [-1,1] \times D^2 \longra M'$ with $\alpha'_1(\pm 1) = h_1(\mp 1, 0)$.
Concatenation yields a normally framed embedded loop
\[
\lambda ~:=~ \{h_0(-,r) * \alpha'_1(-,r) \}_{r \in D^2} ~:~ S^1 \times D^2 \longra M'.
\]
Write $W_1 := M' \times [0,1] \cup_\lambda D^2 \times D^2$ for the trace of the surgery along $\lambda$ in $M'$.
Since $F_0 \circ \lambda(-,0)$ is nulhomotopic, choose a nulhomotopy to yield a bordism
\[
(F_1;f',f'') : (W_1;M',M'') ~\longra~ (B \times [0,1]; B \times \{0\}, B \times \{1\}).
\]
Observe that $\ol{W_1}$ is the trace of surgery along the framed belt sphere $S^1 \times D^2 \hookrightarrow M''$.
By the cancellation lemma \cite[6.4]{RS}, $W_0 \cup_{M'} W_1$ is diffeomorphic to $M \times [0,2]$ relative to $M \times \{0\}$.
In particular, there is $\delta: M \diffeo M''$ with $f'' \circ \delta \simeq f$.
Write $W_0' := M \times [0,2] \cup_\delta \ol{W_1}$.
So we have a new bordism with $h$ replaced by a 2-handle:
\[
(\delta \cup_{f''} F_1; f, f') : (W_0'; M, M') ~\longra~ (B \times [0,1]; B \times \{0\}, B \times \{1\}).
\]

By iteration, we kill all 1-handles of $W_0'$ relative to $M$.
Similarly, repeat relative to $M'$.
Thus, we obtain the desired bordism $(W,F)$ with only 2-handles rel $M$.
\end{proof}

\subsection{Spin version}
We shall need this development to prove Theorem \ref{thm:spin}.

\begin{thm}\label{thm:LWmaps_spin}
Let $(M,s)$ and $(M',s')$ be spin closed 3-manifolds.
Let $f: M \longra B\G$ and $f': M' \longra B\G$ classify regular covers.
There exists a framed oriented link $L$ in $M$ that transforms $(M,s,f)$ into $(M',s',f')$ by a spin bordism if and only if
\begin{equation}\label{eqn:LWmaps_spin}
[M,s,f] ~=~ [M',s',f'] ~\in~ \Omega^\Spin_3(B\G).
\end{equation}
\end{thm}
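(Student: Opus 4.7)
The $(\Rightarrow)$ direction is immediate from the definition of spin bordism: the trace of surgery along $L$ with framings that extend the spin structure $s$ is, by hypothesis, a spin bordism from $(M,s,f)$ to $(M',s',f')$ covering the projection $B\G \times [0,1] \longra B\G$, which yields the identity \eqref{eqn:LWmaps_spin}.

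For the main $(\Leftarrow)$ direction, the plan is to mirror the proof of Lemma~\ref{lem:LWmaps} throughout in the spin category. Starting from \eqref{eqn:LWmaps_spin}, one obtains a compact spin bordism
\[
(W_0,\sigma_0;F_0,f,f') ~:~ (W_0;M,M') ~\longra~ (B\G \times [0,1]; B\G \times \{0\}, B\G \times \{1\});
\]
after joining components by a spin connected sum, we may assume $W_0$ is connected, so a handle decomposition of $W_0$ has no $0$-handles rel either $M$ or $M'$. I would then kill the $1$-handles rel $M$ (and, symmetrically, rel $M'$) by the same Wallace trick as before: for each $1$-handle $h$, epimorphicity of $f_\#$ still produces a loop $\beta$ and hence a normally framed arc $\alpha_1$ whose push-off in $M'$ is a framed embedded circle $\lambda \subset M'$, surgery along which cancels $h$ via \cite[6.4]{RS}.

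The new ingredient is spin compatibility. A $2$-handle attached to $M' \times [0,1]$ along a framed circle $\lambda$ extends the given spin structure if and only if the framing induces the bounding (disk-extendable) spin structure on $S^1 \subset S^1 \times D^2$. Since modifying a normal framing by $\pm 1$ flips the induced spin structure on $S^1$, and the framings of $\alpha_1$ form a $\Z$-torsor, exactly one parity class of framings is spin-admissible. I would choose the normal framing of $\alpha_1$ within this admissible class, so that the surgery cobordism $W_1$ and the composite $W_0 \cup_{M'} W_1$ both carry coherent spin structures extending $\sigma_0$. Iterating eliminates all $1$-handles rel $M$, and repeating rel $M'$ then leaves a spin bordism $(W,\sigma,F)$ whose handle decomposition consists only of $2$-handles, i.e., the trace of surgery on a framed oriented link $L \subset M$.

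The main obstacle is verifying that the cancellation lemma \cite[6.4]{RS} upgrades to the spin category, so that the replacement diffeomorphism $W_0 \cup_{M'} W_1 \diffeo M \times [0,2]$ rel $M \times \{0\}$ can be taken to carry $\sigma_0$ to the product spin structure. This reduces to showing that a canceling $1$-$2$ handle pair admits a unique spin structure compatible with the spin structure on its base $M \times [0,1]$, which is precisely what the spin-admissible framing choice above secures; once this is checked, the induction runs exactly as in Lemma~\ref{lem:LWmaps}.
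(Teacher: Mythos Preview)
Your proposal is correct and follows essentially the same route as the paper, which proves the theorem as a one-line corollary of Lemma~\ref{lem:LWmaps_spin} and proves that lemma by rerunning Lemma~\ref{lem:LWmaps} with exactly the reframing trick you describe. The paper phrases the framing adjustment via the obstruction $\ko(\tau)\in H^2(W_1,M';\pi_1(SO_4))\iso\Z/2$ and the Kervaire--Milnor formula $\ko(\tau^\eta)=\ko(\tau)+\sigma_\#(\eta)$, which is equivalent to your ``bounding spin structure on $S^1$'' criterion; and your last-paragraph worry is unnecessary, since one simply transports the glued spin structure on $W_0\cup_{M'}W_1$ across the diffeomorphism rel $M\times\{0\}$ (uniqueness of the extension is automatic from $H^1(M\times[0,2],M\times\{0\};\Z/2)=0$) and then glues with $\ol{W_1}$ to equip $W_0'$.
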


\begin{lem}\label{lem:LWmaps_spin}
Let $(M,s)$ and $(M',s')$ be connected spin closed 3-manifolds, and let $B$ be a connected CW-complex.
Suppose $f: M \longra B$ and $f': M' \longra B$ are continuous maps that induce epimorphisms on fundamental groups.
Then
\begin{equation}\label{eqn:LWmaps_B_spin}
[M,s,f] = [M',s',f'] \in \Omega^\Spin_3(B) 
\end{equation}
if and only if there is a 4-dimensional smooth connected spin compact bordism
\[
(F;f,f') ~:~ (W,t;M,s,M',s') ~\longra~ (B \times [0,1]; B \times \{0\}, B \times \{1\})
\]
such that $W$ has no 1-handles with respect to $M$ and no 1-handles with respect to $M'$, for a certain handle decomposition of the 4-dimensional cobordism $(W;M,M')$.
\end{lem}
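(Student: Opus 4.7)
The plan is to mirror the proof of Lemma~\ref{lem:LWmaps} verbatim, carrying spin structures through each handle move. The $\Longleftarrow$ direction follows from the definition of $\Omega^\Spin_3(B)$. For $\Longrightarrow$, the hypothesis \eqref{eqn:LWmaps_B_spin} furnishes an initial spin bordism $(W_0,t_0;F_0)$ between $(M,s,f)$ and $(M',s',f')$; we modify it by spin-compatible handle moves to kill all 1-handles relative to each end.

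The opening reductions transfer cleanly. The internal connect sum used to make $W_0$ connected extends the spin structure uniquely because $\pi_0(\Spin(n))=0$ for $n\geq 2$, and the Morse rearrangements that cancel 0-handles against 1-handles of a connected bordism respect the given spin structure. So we may assume $W_0$ is connected with no 0-handles relative to either end.

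For each remaining 1-handle $h$ of $W_0$ attached to $M$, we run Hillman's slide construction verbatim, obtaining a normally framed arc $\alpha_1: [-1,1]\times D^2 \longra M$ and, after push-off and matching isotopy into $M'$, a framed embedded loop $\lambda: S^1\times D^2 \longra M'$ along which to surger. Let $W_1$ be the trace, equipped with a reference map into $B\times[0,1]$ coming from a chosen nulhomotopy of $F_0\circ \lambda(-,0)$. The cancellation lemma \cite[6.4]{RS} applied to $W_0\cup_{M'} W_1$ simplifies the handle decomposition by trading $h$ for the 2-handle of $W_1$.

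The key new input, and the main obstacle, is verifying that $W_1$ itself admits a spin structure $t_1$ extending $t_0|M'$, so that the cancellation takes place in the spin category. The obstruction lies in $\Omega^\Spin_1=\Z/2$ and vanishes precisely when the framing of $\lambda$ induces the bounding spin structure on $S^1$. The normal framing of $\alpha_1$ is specified only up to the $\Z$-torsor of twists given by $\pi_1(SO(2))$; a single twist toggles the induced spin class on $\lambda$ under the surjection $\Z\twoheadrightarrow \Omega^\Spin_1$, while leaving unchanged both the homotopy class of $\alpha_1$ rel boundary and the geometric intersection of $\lambda$ with the belt sphere of $h$ that drives the cancellation. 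Hence a parity correction of the framing produces a spin-compatible $\lambda$, the glued cancellation trades $h$ for a single spin 2-handle, and iteration over all 1-handles relative to $M$, followed by the symmetric argument relative to $M'$, yields the desired spin bordism $(W,t;F)$.
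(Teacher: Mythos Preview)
Your proposal is correct and follows essentially the same route as the paper: both run the proof of Lemma~\ref{lem:LWmaps} and isolate the sole new issue as extending the spin structure from $M'$ over the trace $W_1$ of the surgery on $\lambda$, then fix it by twisting the normal framing of $\lambda$ by an element of $\pi_1(SO_2)\cong\Z$. The only cosmetic difference is in how the $\Z/2$ obstruction is packaged---the paper first lifts $s'$ to an honest framing of $TM'$ (using that $\Spin_3$ is $2$-connected), computes the relative obstruction in $H^2(W_1,M';\pi_1(SO_4))\cong\Z/2$, and invokes the Kervaire--Milnor change-of-framing formula, whereas you phrase the same obstruction as the spin-bordism class of the framed circle in $\Omega^\Spin_1\cong\Z/2$; both descriptions are equivalent and both are killed by the same parity twist.
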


\begin{proof}[Proof of Theorem~\ref{thm:LWmaps_spin}]
By Lemma~\ref{lem:LWmaps_spin}, use only 2-handles: surger along a link.
\end{proof}

\begin{proof}[Proof of Lemma~\ref{lem:LWmaps_spin}]
The $\Longleftarrow$ implication is obvious.
Consider the $\Longrightarrow$ implication. 

Recall the proof of Lemma~\ref{lem:LWmaps}.
We reconstruct $W_1$ to admit a spin structure extending the spin structure $\ol{s'}$ on $\bdry_-W_1 = \ol{M'}$, since by gluing along $M''$ this will induce a spin structure on $W_0'$ extending the spin structure $\ol{s} \sqcup s'$ on $\bdry W_0' = \ol{M} \sqcup M'$.

Since $H^{i+1}(M';\pi_i(\Spin_3))=0$ for all $i \geqslant 0$, by obstruction theory, the spin structure $s'$ lifts to a framing $\phi$ of the tangent bundle $TM'$.
The sole obstruction to extending the stable framing $\phi\oplus\id$ of $TM'\oplus \ul{\R}$ to the tangent bundle $\tau$ of $W_1$ is
\[
\ko(\tau) ~\in~ H^2(W_1,M'; \pi_1(SO_4)) ~~=~ H^2(D^2,S^1; \pi_1(SO_4)) ~=~ \pi_1(SO_4) ~\iso~ \Z/2.
\]
Let $\eta \in \pi_1(SO_2) \iso \Z$.
Reframe the normal bundle of the surgery circle $\lambda(-,0)$ as
\[
\lambda^\eta ~:~ S^1 \times D^2 \longra M' ~;~ (z,r) \longmapsto \lambda(z,\eta_z(r)).
\]
Write $W_1^\eta := M' \times [0,1] \cup_{\lambda^\eta} D^2 \times D^2$ with tangent bundle $\tau^\eta$.
By \cite[Lemma~6.1]{KM},
\[
\ko(\tau^\eta) ~=~ \ko(\tau) + \sigma_\#(\eta) ~\in~ \pi_1(SO_4),
\]
where $\sigma: SO_2 \longra SO_4$ denotes the inclusion.
Since the induced map $\sigma_\#$ on fundamental groups is surjective, find $\eta$ so that $\tau^\eta$ has a framing extending $\phi\oplus\id$.
Hence $W_1^\eta$ has a spin structure extending $\ol{s'}$ on its lower oriented boundary $\ol{M'}$.

By gluing, we obtain an induced spin structure on $W_0 \cup_{M'} W_1^\eta \diffeo M \times [0,2]$ relative to $M \times \{0\}$.
Modifying Proof~\ref{lem:LWmaps}, redefine $W_0' := M \times [0,2] \cup_{\delta^\eta} \ol{W_1^\eta}$ with spin structure the union of this one and the orientation-reversal of the one on $W_1^\eta$.
Therefore, the spin structure on $W_0'$ restricts to $\ol{s} \sqcup s'$ on $\bdry W_0' = \ol{M} \sqcup M'$.
\end{proof}

\section{Ambient surgery on pairs of points}

Let $G = G_- *_{G_0} G_+$ be an injective amalgam of groups.
The corresponding \textbf{double mapping cylinder model} of its classifying space is the homotopy colimit
\begin{equation}\label{eqn:dmcm}
BG ~:=~ BG_- \;\cup_{BG_0 \times \{-1\}}\; BG_0 \times [-1,+1] \;\cup_{BG_0 \times \{+1\}}\; BG_+
\end{equation}
with respect to the maps $BG_0 \longra BG_\pm$ induced from the inclusions $G_0 \longra G_\pm$.

Akin to Stalling's thesis, here is a folklore fact proven in \cite[1.1]{Bowditch} (cf.~\cite{BS}).

\begin{thm}[Bowditch]\label{thm:Bowditch}
If $G$ and $G_0$ are finitely presented, so are $G_-$ and $G_+$.
\end{thm}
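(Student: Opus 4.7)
The plan is a topological Bass--Serre-theoretic argument that transfers finite presentability from $G$ and $G_0$ to $G_-$ and $G_+$ via 2-complexes and the action on the Bass--Serre tree dual to the amalgam. Finite presentations of $G$ and $G_0$ are equivalent to connected finite 2-complexes $K$ and $K_0$ with $\pi_1(K) \iso G$ and $\pi_1(K_0) \iso G_0$; the goal is to produce analogous finite 2-complexes $K_-$ and $K_+$ for $G_-$ and $G_+$.

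First, I would realize the inclusion $G_0 \hookrightarrow G$ by a cellular map $\iota\colon K_0 \longra K$; by replacing $K$ with the double mapping cylinder of $\iota$ (cf.\ the model \eqref{eqn:dmcm}), we may arrange that $K$ contains a bicollared subcomplex homeomorphic to $K_0$ separating $K$ into two connected pieces $K_-$ and $K_+$, each meeting the bicollar in a copy of $K_0$. This replacement adds only finitely many cells, so $K$ remains a finite 2-complex, while $K_\pm$ are connected (a priori perhaps infinite) subcomplexes carrying the inclusions $K_0 \hookrightarrow K_\pm$ that induce $G_0 \hookrightarrow G_\pm$.

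Next, I would pass to the universal cover $p\colon \wt{K} \longra K$ with deck group $G$, and define a $G$-equivariant cellular map $\Phi\colon \wt{K} \longra T$ to the Bass--Serre tree by sending each connected component of $p^{-1}(K_0)$ (the bicollar lifts) to the corresponding edge-midpoint of $T$. Fix a vertex $v_+ \in T$ with stabilizer $G_+$, let $\wt{K}_+ := \Phi^{-1}(\mathrm{star}(v_+))$, and form $K_+' := \wt{K}_+ / G_+$. Bass--Serre theory together with van Kampen's theorem identifies $\pi_1(K_+') \iso G_+$ and realizes $K_+'$ as (a deformation retract of) the vertex piece $K_+$ obtained from the bicollared splitting; the symmetric construction produces $K_-$.

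The main obstacle is verifying that $K_+$ has only finitely many cells, hence yields a finite presentation. The $G$-translates $\{g\wt{K}_+\}_{gG_+ \in G/G_+}$ are mutually disjoint connected components of $\Phi^{-1}$ over the open vertex-orbit, so each $G_+$-orbit of cells of $\wt{K}_+$ injects into a distinct $G$-orbit of cells of $\wt{K}$; since $K = \wt{K}/G$ is finite, $K_+ = \wt{K}_+/G_+$ has only finitely many cells in each dimension $\leqslant 2$. The 2-cell count is controlled by those of $K$ together with those contributed by the bicollar, which is a finite copy of $K_0$; thus the finite presentability of $G_0$ is precisely what feeds in at this point, as it ensures that the relators attaching $K_0$ into the vertex pieces are finite in number. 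Iterating the argument symmetrically yields the required finite 2-complex, and hence a finite presentation, for each of $G_-$ and $G_+$.
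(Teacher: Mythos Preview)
The paper does not give its own proof of this theorem: it is stated as a folklore fact and attributed to \cite[1.1]{Bowditch} (with a cross-reference to \cite{BS}), so there is no in-paper argument to compare your proposal against.

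That said, your proposal has a genuine gap in the second paragraph. You want to replace $K$ by ``the double mapping cylinder of $\iota$'' so that $K_0$ becomes a bicollared \emph{separating} subcomplex. But a double mapping cylinder requires two target complexes $K_-$ and $K_+$, which is precisely what you are trying to construct; invoking the model~\eqref{eqn:dmcm} here is circular, since that model already presupposes classifying spaces for $G_\pm$. If instead you meant the ordinary mapping cylinder of the single map $\iota\colon K_0\to K$, then $K_0$ does \emph{not} separate: the complement of the collar deformation-retracts onto $K$, which is connected. Your map $\Phi\colon\wt{K}\to T$ in the third paragraph is built from the lifts of these alleged separating pieces, so it is not actually defined, and the finiteness count in the last paragraph then rests on nothing.

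The standard repair (and this is essentially what Bowditch and Scott--Wall do) is to construct $\Phi$ \emph{first}, without any downstairs separation: since $T$ is contractible and the $G$-action on $\wt{K}$ is free and cellular, one builds a $G$-equivariant map $\wt{K}\to T$ skeleton by skeleton and then straightens it to be transverse to edge-midpoints. The edge-preimages descend to a finite pattern of tracks in the finite $2$-complex $K$, and the vertex-preimages modulo their stabilisers $G_\pm$ give the desired finite $2$-complexes; finite presentability of $G_0$ enters in controlling the tracks. Your outline from the third paragraph onward is close to this in spirit, but it cannot proceed until the construction of $\Phi$ is made independent of the circular step.
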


Instead of $G_0$ being finitely presented, the proof of the next statement can work assuming $G_-, G_0, G_+$ are finitely generated, but we prefer the former hypothesis.

\begin{prop}\label{prop:epi}
Let $X$ be a connected oriented closed smooth 4-manifold.
Suppose $f: X \longra BG$ classifies a regular cover.
Assume $G_0$ is finitely presented.
Then $f$ can be re-chosen up to homotopy so that: $f$ is transverse to the bicollared subspace $BG_0 \times \{0\}$ in the model \eqref{eqn:dmcm}, the 3-submanifold preimage $M$ in $X$ is connected, and the restriction $f: M \longra BG_0$ also classifies a regular cover.
\end{prop}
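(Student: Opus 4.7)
The plan is to homotope $f$ in three stages: first achieve transversality, then connectedness of $M$, and finally $\pi_1$-surjectivity of $f|M$.

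Stage 1 (transversality). Smoothly approximate $f$ and perturb, using the product structure on the bicollar $BG_0 \times (-\epsilon, \epsilon) \subset BG$, to arrange $f \pitchfork BG_0 \times \{0\}$. The preimage $M := f^{-1}(BG_0 \times \{0\})$ is then a closed smooth 3-submanifold of $X$, with finitely many components.

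Stage 2 (connectedness). Consider the bipartite dual graph $\Gamma_f$: vertices are the connected components of $X_\pm := f^{-1}(BG_\pm\text{-side})$, edges are the components of $M$, with natural incidence. Connectedness of $X$ implies that of $\Gamma_f$. If $M$ has $\geqslant 2$ components, two edges $M_1, M_2$ of $\Gamma_f$ share a vertex $v$ corresponding to some component $X_*^{(j)}$ of $X_+$ or $X_-$ whose closure contains both $M_1$ and $M_2$. Choose an embedded arc $\gamma$ inside $X_*^{(j)}$ from a point of $M_1$ to a point of $M_2$, meeting $M$ only at its endpoints. Realize ambient $0$-surgery of $M$ along $\gamma$ as a homotopy of $f$ in a tubular neighborhood $N(\gamma) \cong I \times D^3$, replacing the transverse preimage $\{0,1\} \times D^3$ with the tube $I \times S^2$ and thereby merging $M_1, M_2$ into a single component. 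The $3$-sphere extension obstruction across $N(\gamma) \cong B^4$ vanishes because $\pi_n(BG) = 0$ for $n \geqslant 2$ (as $BG = K(G,1)$). Iterating reduces $\Gamma_f$ to a single edge.

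Stage 3 ($\pi_1$-surjectivity). Let $H := \mathrm{Image}((f|M)_\#) \leqslant G_0$ and fix a finite generating set $g_1, \ldots, g_n$ of $G_0$ (using finite generation, which follows from finite presentation). For each $g_i \notin H$, surjectivity of $f_\#$ yields a loop $\beta$ in $X$ based at some $p \in M$ with $f_\#[\beta] = g_i$; by general position (1-dim in 4-dim), arrange the interior of $\beta$ to lie in $X \setminus M$. Since $G_0 \hookrightarrow G$ is $\pi_1$-injective by the amalgam hypothesis, the loop $f \circ \beta$ is homotopic rel $p$ in $BG$ to a loop $\gamma_i$ in $BG_0 \times \{0\}$ representing $g_i$. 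Homotope $f$ supported in a tubular neighborhood $N(\beta) \cong S^1 \times D^3$ to carry $f \circ \beta$ to $\gamma_i$; after a controlled transverse modification inside $N(\beta)$ (again using $\pi_{\geqslant 2}(BG) = 0$), the new preimage $M'$ is the connected sum of $M$ with $S^1 \times S^2$, remaining connected and now containing a loop whose $f|M'$-image represents $g_i$. After at most $n$ iterations, $(f|M)_\#$ surjects onto $G_0$.

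The main technical obstacle is the combined transverse-homotopy construction in Stages 2 and 3: one must simultaneously homotope $f$, perform a controlled ambient surgery on $M$, and leave the rest of the structure undisturbed outside a compact neighborhood. Vanishing of $\pi_{\geqslant 2}(BG)$ kills all extension obstructions, the bicollar of $BG_0 \times \{0\}$ supplies explicit local product models for the transverse modification inside each surgery neighborhood, and generic perturbation restores smoothness and transversality after each step. Since homotopic maps induce the same map on fundamental groups, $f_\#$ remains surjective onto $G$ throughout.
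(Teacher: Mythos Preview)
Your three-stage outline matches the paper's strategy, but Stages~2 and~3 share a genuine gap that the paper closes with an extra preliminary step you omit. In Stage~2, realizing the ambient $0$-surgery along $\gamma$ as a homotopy of $f$ requires more than $\pi_{\geqslant 2}(BG)=0$: to make the tube $I\times S^2$ land in $BG_0$ one needs a homotopy rel endpoints of $f\circ\gamma$ into $BG_0$, which exists exactly when $[f\circ\gamma]=0\in\pi_1(BG_\pm,BG_0)$ (this is the hypothesis, and the role of the map $H$, in Lemma~\ref{lem:handleexchange}). For an arbitrary arc in a component $X_*^{(j)}$ that relative class may be nonzero, and since $(f|X_*^{(j)})_\#$ need not surject onto $G_\pm$ you cannot in general kill it by concatenating with a loop in $X_*^{(j)}$. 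Your appeal to $\pi_{\geqslant 2}(BG)=0$ only shows that a correctly constructed $g$ is homotopic to $f$; it does not construct $g$. In Stage~3 the dimension count is wrong: a loop and a codimension-one submanifold in a $4$-manifold meet generically in points ($1+3=4$), so general position does \emph{not} push the interior of $\beta$ off $M$, and without that your tubular neighborhood $N(\beta)$ already meets $M$ in several discs.

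The paper repairs both issues at once by first homotoping $f$ (Lemma~\ref{lem:loops}, using finite generation of $G_\pm$ via Theorem~\ref{thm:Bowditch}) so that there exist disjoint embedded $1$-handlebodies $\Lambda_\pm\subset X$ with $f(\Lambda_\pm)\subset BG_\pm\setminus BG_0$ and $(f|\Lambda_\pm)_\#$ surjecting onto $G_\pm$. With this reservoir in place, every arc in Stage~2 can be recalibrated---by midpoint-concatenation with a loop on $\partial\Lambda_\pm$---to satisfy $[f\circ\gamma]=0\in\pi_1(BG_\pm,BG_0)$; and in Stage~3 the loops representing generators of $G_0$ are chosen inside the component $V_+$ of $X\setminus M$ containing $\Lambda_+$, hence already disjoint from $M$, so that Lemma~\ref{lem:handleexchange} applies directly.
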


This is proven after three lemmas.
The first is an apparatus to recalibrate paths.

\begin{lem}\label{lem:loops}
Let $X$ be a connected oriented smooth $n$-manifold with $n>2$.
Consider a space $B=B_- \cup_{B_0} B_+$ with $B,B_\pm - B_0$ path-connected and $B_0 = B_- \cap B_+$.
Suppose $f: X \longra B$ is $\pi_1$-surjective.
Assume $\pi_1(B_\pm-B_0)$ are finitely generated, by $r_\pm$ elements.
There are disjoint 1-handlebodies $\Lambda_\pm \diffeo \# r_\pm(S^1 \times D^{n-1}) \subset X$ and $f': X \longra B$ homotopic to $f$ having $\pi_1$-surjective restrictions $f': \Lambda_\pm \longra B_\pm-B_0$.
\end{lem}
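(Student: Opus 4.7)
The plan is to build each $\Lambda_\pm$ as a small $n$-ball together with $r_\pm$ framed embedded 1-handles whose cores represent generators of $\pi_1(B_\pm-B_0)$, and then to correct $f$ by a homotopy supported in disjoint neighborhoods of $\Lambda_-$ and $\Lambda_+$.

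First, I would choose disjoint smoothly embedded closed $n$-balls $D_-,D_+\subset X$ with interior basepoints $x_\pm$, basepoints $b_\pm\in B_\pm-B_0$, generators $\gamma_{\pm,1},\dots,\gamma_{\pm,r_\pm}$ of $\pi_1(B_\pm-B_0,b_\pm)$, and paths $\omega_\pm$ in $B$ from $f(x_\pm)$ to $b_\pm$. Let $\tilde\gamma_{\pm,i}\in\pi_1(B,f(x_\pm))$ denote the translate of $\gamma_{\pm,i}$ along $\omega_\pm$. Since $f_\#\colon\pi_1(X,x_\pm)\longra\pi_1(B,f(x_\pm))$ is surjective, each $\tilde\gamma_{\pm,i}$ lifts to a based loop $\ell_{\pm,i}\colon(S^1,*)\longra(X,x_\pm)$.

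Because $n>2$, by general position I would isotope the collection of loops so that each $\ell_{\pm,i}$ is a smoothly embedded circle, the circles within a given sign meet only at the common basepoint $x_\pm$, the circles of opposite signs are disjoint, and each $\ell_{\pm,i}$ is kept entirely outside $D_\mp$. The regular neighborhood of $D_\pm\cup\bigcup_i \ell_{\pm,i}$ in the orientable manifold $X$ is then a 1-handlebody $\Lambda_\pm\diffeo\#r_\pm(S^1\times D^{n-1})$, with $\Lambda_-\cap\Lambda_+=\emptyset$ by construction.

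Next, $\Lambda_\pm$ deformation-retracts onto $W_\pm:=\bigvee_i \ell_{\pm,i}$ at $x_\pm$, and $f|_{W_\pm}$ sends the $i$-th circle to a loop representing $\tilde\gamma_{\pm,i}$, which is freely homotopic in $B$ to $\gamma_{\pm,i}\in\pi_1(B_\pm-B_0,b_\pm)$ via $\omega_\pm$. Define $g_\pm\colon\Lambda_\pm\longra B_\pm-B_0$ to be the composition of the retraction $\rho\colon\Lambda_\pm\longra W_\pm$ with the wedge map sending $\ell_{\pm,i}$ to $\gamma_{\pm,i}$. Chaining the deformation retraction with the free homotopy on $W_\pm$ yields a homotopy $f|_{\Lambda_\pm}\simeq g_\pm$ in $B$, and $g_\pm$ is $\pi_1$-surjective onto $\pi_1(B_\pm-B_0)$ by construction. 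These two homotopies live in disjoint open tubular neighborhoods of $\Lambda_\pm$ and extend via the homotopy extension property to a global homotopy $f\simeq f'$ on $X$ with $f'|_{\Lambda_\pm}=g_\pm$.

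The main obstacle I expect is the general-position step: arranging the loops $\ell_{\pm,i}$ as two embedded wedges, one at each $x_\pm$, mutually disjoint and each avoiding the opposite ball, while preserving the based-homotopy classes $\tilde\gamma_{\pm,i}$. This is classical in dimension $n\geq 3$, but it needs careful sequencing so that the successive perturbations do not force circles back into $D_\mp$ or change the $\pi_1$-data already arranged.
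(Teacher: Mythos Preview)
Your proposal is correct and follows essentially the same approach as the paper's proof: lift generators of $\pi_1(B_\pm-B_0)$ through the $\pi_1$-surjection $f_\#$, use general position in dimension $n>2$ and orientability to realize them by two disjoint embedded wedges of circles with trivially framed tubular neighborhoods $\Lambda_\pm\diffeo\# r_\pm(S^1\times D^{n-1})$, and then invoke the homotopy extension property to replace $f$ by $f'$ with $f'|\Lambda_\pm$ landing $\pi_1$-surjectively in $B_\pm-B_0$. The only cosmetic difference is that the paper first homotopes $f$ so that $f(x_\pm)=b_\pm$, dispensing with your connecting paths $\omega_\pm$ and making the homotopies on the wedge circles based rather than free; your version with $\omega_\pm$ is equivalent and your identification of the general-position step as the only delicate point matches the paper's emphasis.
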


\begin{proof}
We may homotope $f$ so that its image contains some points $b_\pm$ in $B_\pm-B_0$.
Then there are $x_\pm \in X$ such that $f(x_\pm)=b_\pm$.
There are based loops $\mu^1_\pm, \ldots, \mu^{r_\pm}_\pm: (S^1,1) \longra (B_\pm-B_0,b_\pm)$ whose based homotopy classes generate $\pi_1(B_\pm-B_0,b_\pm)$.
Since $f_\#: \pi_1(X,x_\pm) \longra \pi_1(B,b_\pm)$ is surjective and $n>2$, there exist disjoint smoothly embedded based loops $\lambda^1_\pm, \ldots, \lambda^{r_\pm}_\pm: (S^1,1) \longra (X,x_\pm)$ and a based homotopy $H^i_\pm: S^1 \times [0,1] \longra B$ from $f \circ \lambda^i_\pm$ to $\mu^i_\pm$.
Since $X$ is oriented, for each $i$, there is a tubular neighborhood $\Lambda_i^\pm$ of $\lambda_i^\pm(S^1)$ and a diffeomorphism $\Lambda_i^\pm \diffeo S^1 \times D^{n-1}$.
Taking the radii of the tubes sufficiently small, we find that the $\Lambda_i^\pm$ pairwise intersect in a fixed $D^n$-neighborhood of $x_\pm$.
Thus we obtain disjoint embeddings of $\# r_-(S^1 \times D^{n-1})$ and $\# r_+(S^1 \times D^{n-1})$ in $X$, say with images called $\Lambda_-$ and $\Lambda_+$.

Finally, using these NDR neighborhoods $\Lambda_\pm$ of $\bigvee_i \lambda^i_\pm$ and specific homotopies $\bigvee_i H^i_\pm$ as the data for the homotopy extension property \cite[Theorem~VII:1.5]{Bredon}, we obtain a homotopy $H: X \times [0,1] \longra B$ from $f$ to a map $f'$ such that $f'\circ \lambda^i_\pm = \mu^i_\pm$.
Hence $f'(\Lambda_\pm) \subset B_\pm-B_0$ and $(f'|\Lambda_\pm)_\#\pi_1(\Lambda_\pm,x_\pm) = \pi_1(B_\pm-B_0,b_\pm)$.
\end{proof}

Given a continuous map $f: X \longra B$ from a smooth manifold $X$ to a topological space $B$, and given a subspace $B_0$ that admits a tubular neighborhood $E(\xi) \subset B$, W~Browder defines \textbf{$f$ to be transverse to $B_0$} to mean that the conclusion of the implicit-function theorem holds:
the preimage $X_0 = f^{-1}(B_0)$ is a smooth submanifold of $X$ with normal bundle $\nu(X_0 \hookrightarrow X) = (f|X_0)^*(\xi)$ \cite[II:\S2]{Browder}.
Since the proof is omitted for Browder's generalization \cite[II:2.1]{Browder} of Thom's transversality theorem \cite[I:5]{Thom}, we give details for the trivial line bundle $\xi = \ul{\R}$.

\begin{lem}\label{lem:transverse}
Let $f: X \longra B$ be a continuous map from a smooth manifold to a space $B$.
For any \textbf{bicollared subspace} $B_0$ of $B$ (i.e., $B_0$ has a neighborhood in $B$ homeomorphic to $B_0 \times \R$), there exists a map $f': X \longra B$ transverse to $B_0$ and homotopic to $f$, relative to the complement of an open neighborhood of $f^{-1}(B_0)$.
\end{lem}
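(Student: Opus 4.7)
The plan is to use the bicollar hypothesis $W \iso B_0 \times \R$ to reduce the transversality problem to the standard smooth-approximation fact: a continuous real-valued function on a smooth manifold can be perturbed rel the complement of a neighborhood of its zero set to a smooth function having $0$ as a regular value.

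First, set $U := f^{-1}(W)$, open in $X$, and write $f|_U = (f_1, f_2)$ with $f_1 : U \longra B_0$ and $f_2 : U \longra \R$, so that $f^{-1}(B_0) = f_2^{-1}(0)$, a closed subset of $X$. By normality of the metrizable manifold $X$, pick an open $V$ with $f^{-1}(B_0) \subset V \subset \overline{V} \subset U$, and define nested open neighborhoods $V_0 := V \cap f_2^{-1}(-1/2, 1/2)$ and $V_1 := V \cap f_2^{-1}(-1, 1)$; thus $\overline{V_1} \subset U$. By Whitney approximation on the smooth manifold $U$, choose a smooth $g : U \longra \R$ with $|g - f_2| < 1/8$ on $\overline{V_1}$, and by Sard's theorem shift $g$ by a regular value of modulus $< 1/8$ so that $g$ is smooth, $0$ is a regular value of $g$, and $|g - f_2| < 1/4$ on $\overline{V_1}$.

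Next, glue using a continuous bump function $\phi : X \longra [0,1]$ with $\phi \equiv 1$ on a neighborhood of $f^{-1}(B_0)$ contained in $V_0$ and $\phi \equiv 0$ off $V_1$, produced by Urysohn's lemma. Define $\wt{f_2} := (1-\phi) f_2 + \phi g$ on $U$, and set $f' := (f_1, \wt{f_2})$ on $V_1$ and $f' := f$ on $X \smallsetminus V_1$; these agree on the overlap where $\phi \equiv 0$, and the condition $\overline{V_1} \subset U$ ensures continuity across $\partial V_1$. The required homotopy from $f$ to $f'$ linearly interpolates the second coordinate on $V_1$ and is constant on $X \smallsetminus V_1$.

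The main obstacle is ruling out extraneous zeros of $\wt{f_2}$ and checking that where it does vanish, $0$ is a regular value. On $V_0$, $\wt{f_2} = g$ is smooth with $0$ a regular value, so $(f')^{-1}(B_0) \cap V_0$ is a smooth hypersurface. On $V_1 \smallsetminus V_0$, the bound $|f_2| \geqslant 1/2$ combined with $|g - f_2| < 1/4$ forces $g$ to share the sign of $f_2$, so the convex combination $\wt{f_2}$ vanishes nowhere. Outside $V_1$, $\wt{f_2} = f_2$ with $|f_2| \geqslant 1$. Hence $(f')^{-1}(B_0) \subset V_0$ is a smooth codimension-$1$ submanifold whose normal bundle is trivial and isomorphic to $(f_1|)^* \ul{\R}$, verifying Browder's transversality condition for the trivial line bundle $\xi = \ul{\R}$, with the modification supported in the open neighborhood $V_1$ of $f^{-1}(B_0)$.
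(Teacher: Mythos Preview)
Your strategy---project to the $\R$-factor of the bicollar, Whitney-approximate, apply Sard, then glue back---is exactly the paper's, and the overall shape of the argument is correct. There is, however, a real slip in the zero-set analysis. You take $\phi\equiv 1$ only on some neighborhood $O\subset V_0$ of $f^{-1}(B_0)$, yet then assert ``on $V_0$, $\wt{f_2}=g$''; that holds only on $O$. In the region $V_0\smallsetminus O$, where $0<\phi<1$ is possible and $|f_2|<\tfrac12$, nothing forces $f_2$ and $g$ to share a sign (the bound $|g-f_2|<\tfrac14$ permits, say, $f_2(x)=0.1$ and $g(x)=-0.1$), so the convex combination $\wt{f_2}=(1-\phi)f_2+\phi g$ may vanish. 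At such a spurious zero $\wt{f_2}$ is not even smooth (your $\phi$ is only continuous), so $(f')^{-1}(B_0)$ fails to be a submanifold there. The side claim ``$|f_2|\geq 1$ outside $V_1$'' is also false on $(U\smallsetminus V)\cap f_2^{-1}(-1,1)$, though the needed conclusion $\wt{f_2}\neq 0$ survives there simply because $f_2^{-1}(0)\subset V_1$.

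The repair is to arrange $\phi\equiv 1$ on the whole of $\{|f_2|\leq\tfrac12\}$, so that the transition set $\{0<\phi<1\}$ sits inside $\{|f_2|\geq\tfrac12\}$; this needs one more layer of nesting among your $V$'s, or a factor of the form $\psi\circ f_2$ built into $\phi$. The paper sidesteps this bump-function bookkeeping altogether: it performs the full linear interpolation from $f$ to $(f_1,g)$ on all of $f^{-1}(N)$ (your $U$), then invokes the homotopy extension property for the closed NDR $f^{-1}\beta(B_0\times[-1,1])\sqcup\bigl(X\smallsetminus f^{-1}(N)\bigr)$ in the normal space $X$ to extend the homotopy to $X\times[0,1]$, constant on the second piece. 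Since the resulting $f'$ agrees with $(f_1,g)$ on a full neighborhood of $g^{-1}(0)$, no transition region arises and no spurious zeros can appear.
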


\begin{proof}
We have an open embedding $\beta: B_0 \times \R \longra B$ with $\beta(B_0 \times \{0\}) = B_0 \subset B$.
Write $N \subset B$ for the image of $\beta$, and write $\pi_2: B_0 \times \R \longra \R$ for the projection.
Note $f^{-1}(N) \subset X$ is a smooth manifold, since it is an open set in a smooth manifold.
By Whitney's approximation theorem \cite[II:11.7]{Bredon}, the $C^0$ function $\pi_2 \circ \beta^{-1} \circ f: f^{-1}(N) \longra \R$ is $0.5$-close to a $C^\infty$ function $g: f^{-1}(N) \longra \R$.
Define
\[
H : f^{-1}(N) \times [0,1] \longra B ~;~ (x,t) \longmapsto \beta\left( (\pi_1 \beta^{-1} f)(x), (1-t) (\pi_2 \beta^{-1} f)(x) + t g(x) \right).
\]
Note $H$ is a homotopy from $H(x,0)=f(x)$ to a map $f' := H(-,1) : f^{-1}(N) \longra B$.
Then $f'$ is transverse to $B_0$ with $(f')^{-1}(B_0)=g^{-1}\{0\}$ a smooth submanifold of $X$; where by Sard's theorem and a tiny homotopy, we assume $0$ is a regular value of $g$.

It remains to extend $H$ to $X \times [0,1]$ so that $H(x,t)=f(x)$ for all $x \in X-N$.
Using explicit formulas derived from the tubular neighborhood structure $\beta$, this is achieved by the homotopy extension property for the neighborhood deformation retract $f^{-1}\beta(B_0 \times [-1,1]) \sqcup (X-N)$ closed in the $T_4$ space $X$; see \cite[Theorem~VII:1.5]{Bredon}.
The desired map $f': X \longra B$ is again $H(-,1)$ of this extension.
\end{proof}

We perform \emph{1-handle exchanges in dimension 4} by an obstruction-theoretic argument.
This is not in the literature, but see \cite[p67]{Hempel} and \cite[Lemma~I:3]{Cappell}.
Recall the \textbf{frontier} $\Fr_X(A) := \Cl_X(A)\cap\Cl_X(X-A)$ for $A \subset X$, a topological space.

\begin{lem}\label{lem:handleexchange}
Let $f: X \longra B$ be a continuous map from a smooth 4-manifold to a path-connected space $B$, transverse to a path-connected separating subspace $B_0$ of $B = B_- \cup_{B_0} B_+$.
Decompose $X = X_- \cup_{X_0} X_+$ by the $f$-preimages.
Let $\alpha: (D^1,\bdry D^1) \longra (X_\pm,X_0)$ be a smoothly embedded arc with $[f\circ \alpha] = 0 \in \pi_1(B_\pm,B_0)$.
Suppose $\pi_3(B_\mp) = 0 = \pi_4(B)$.
Then $f$ is homotopic to a $B_0$-transverse map $g: X \longra B$ whose preimage of $B_0$ is the result of adding a 1-handle with core $\alpha$.
Namely, for some open-tubular neighborhood $U \diffeo D^1 \times \mathring{D}^3$ of the arc $\alpha$ in $X_\pm$:
\[
g^{-1}(B_0) = (X_0 \cup \Fr_X U) - (X_0 \cap U).
\]
\end{lem}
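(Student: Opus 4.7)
The plan is to modify $f$ by a homotopy supported in a small neighborhood of $\alpha$, pushing the tubular neighborhood of $\alpha$ across the hypersurface $X_0 = f^{-1}(B_0)$ so that the lateral surface of the tube becomes the new preimage of $B_0$. The construction proceeds in two conceptual stages: first, use the hypothesis $[f \circ \alpha] = 0 \in \pi_1(B_\pm, B_0)$ to push the core arc $\alpha$ through $B_0$ into $B_\mp$; then thicken this push to the normal directions via obstruction theory, where the hypotheses $\pi_3(B_\mp) = 0$ and $\pi_4(B) = 0$ make the two relevant obstructions vanish.

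Assume WLOG that $\alpha \subset X_+$, so $B_\mp = B_-$. Let $\tau \cong D^1 \times D^3$ be a closed tubular neighborhood of $\alpha$ in $X_+$, with lateral surface $D^1 \times S^2 \subset \mathring X_+$ and endcaps $\bdry D^1 \times D^3 \subset X_0$. Enlarge $\tau$ to $V \cong D^4$ by attaching a thin collar of $\bdry \tau$ in $X$, so that $V$ extends past the lateral surface into $\mathring X_+$ and past the endcaps into $X_-$. Using a relative nulhomotopy $H: D^1 \times [0,1] \to B_+$ of $f \circ \alpha$ (with $H(D^1 \times \{1\}) \cup H(\bdry D^1 \times [0,1]) \subset B_0$) provided by the hypothesis, concatenated with the bicollar $\beta: B_0 \times (-1, 1) \hookrightarrow B$, I build a pushing family $P: D^1 \times [0, 2] \to B$ transverse to $B_0$ with $P(\cdot, 0) = f \circ \alpha$, $P(\cdot, 1) \subset B_0$, and $P(\cdot, 2) \subset B_-$.

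Next, I construct $g: V \to B$ by extending $P(\cdot, 2)$ from the core $\alpha$ over the tube $\tau$. Specifically, define $g|_{\bdry \tau}$ by sending the lateral $D^1 \times S^2$ via $(s, \omega) \mapsto P(s, 1) \in B_0$ and sending the endcaps $\bdry D^1 \times D^3$ via the bicollar push of $P(\bdry D^1, 1)$ into $B_-$, chosen coherently at the corners $\bdry D^1 \times S^2$. The extension of $g|_{\bdry \tau}: S^3 \to B_- \cup B_0$ to $g|_\tau: D^4 \to B_- \cup B_0$ is obstructed by $\pi_3(B_-) = 0$. On the outer shell $V \setminus \mathring\tau \cong S^3 \times I$, I extend $g$ to agree with $f$ on $\bdry V$, using the bicollar profile to interpolate consistently. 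Finally, I build a homotopy $F: V \times I \to B$ from $f|_V$ to $g|_V$, fixed on $\bdry V \times I$ and equal to $P$ along $\alpha \times I$; since $V \times I \cong D^5$ has boundary $S^4$, the fill-in obstruction for $F$ lands in $\pi_4(B) = 0$. Extending $F$ by $f$ (constant in $t$) on $X \setminus V$ yields the desired homotopy from $f$ to $g$.

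The main obstacle is the precise coordination of boundary data so that the two governing extension obstructions fall into the hypothesized groups $\pi_3(B_\mp)$ and $\pi_4(B)$, rather than into uncontrolled groups like $\pi_*(B_+)$ or $\pi_*(B_0)$; the bicollar neighborhood of $B_0$ and the explicit radial form of $P$ are what confine the obstructions appropriately. After $g$ is built, Lemma~\ref{lem:transverse} perturbs it to be transverse to $B_0$, and the bicollar-radial form of $P$ ensures that $g^{-1}(B_0) = (X_0 \cup \Fr_X U) - (X_0 \cap U)$, as claimed.
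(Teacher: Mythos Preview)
Your strategy matches the paper's: push the core across $B_0$ using the relative nulhomotopy $H$, fill the inner region by $\pi_3(B_\mp)=0$, and obtain the homotopy $f\simeq g$ by $\pi_4(B)=0$. The paper organizes the tube into three concentric shells $O,M,I$ (radii $[\tfrac23,1],[\tfrac13,\tfrac23],[0,\tfrac13]$) together with a small collar $C$ on the $X_\mp$-side, and writes down $g$ by explicit formulas on $O$ (a radial reparametrization of $f$) and on $M$ (via $H$), then fills the $3$-sphere $\Fr_X(C\cup I)$ into $B_\mp$; this is exactly your two obstruction steps, just made explicit.

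Two places where your write-up is weaker than it needs to be. First, the sentence ``On the outer shell $V\setminus\mathring\tau\cong S^3\times I$, I extend $g$ to agree with $f$ on $\bdry V$, using the bicollar profile to interpolate consistently'' hides the real work: you have not exhibited a homotopy from $g|_{\bdry\tau}$ to $f|_{\bdry V}$, and without one there is an uncontrolled $\pi_3(B)$ obstruction. The paper avoids this by \emph{defining} $g$ on the outer shell via the radial compression $\phi(s,x)\mapsto (f\circ\phi)(s,(3\|x\|-2)x)$ and on the middle shell via $H$; you should do the same rather than assert an interpolation. Second, invoking Lemma~\ref{lem:transverse} at the end is both unnecessary and dangerous: a transversality perturbation could alter $g^{-1}(B_0)$ beyond the claimed form. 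Instead, after filling the inner disc into $B_\mp$, use the bicollar of $B_0$ in $B_\mp$ to push the interior strictly into $B_\mp-B_0$, so that $g$ is already transverse with $g^{-1}(B_0)$ equal to the stated set on the nose. With those two fixes your argument becomes the paper's.
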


\begin{proof}
Let $T$ be a closed-tubular neighborhood of $\alpha(D^1)$ in $X_\pm$.
There is a framing diffeomorphism $\phi: (D^1 \times D^3,\bdry D^1 \times D^3) \longra (T,T \cap X_0)$ with $\phi(s,0) = \alpha(s)$.
Define
\begin{eqnarray*}
O &:=& \phi \left\{ (s,x) \in D^1 \times D^3 \;\mid\; {\textstyle\frac{2}{3}} \leqslant \|x\| \leqslant 1 \right\}\\
M &:=& \phi \left\{ (s,x) \in D^1 \times D^3 \;\mid\; {\textstyle\frac{1}{3}} \leqslant \|x\| \leqslant {\textstyle\frac{2}{3}} \right\}\\
I &:=& \phi \left\{ (s,x) \in D^1 \times D^3 \;\mid\; 0 \leqslant \|x\| \leqslant {\textstyle\frac{1}{3}} \right\},
\end{eqnarray*}
which is a decomposition of $T = O \cup M \cup I$ into three closed subsets.
Define a map
\[
g : O \longra B_\pm ~;~ \phi(s,x) \longmapsto (f\circ\phi)(s,(3\|x\|-2) x).
\]
Since $[f \circ \alpha] = 0 \in \pi_1(B_\pm,B_0)$, there exists a map $H: D^1 \times [0,1] \longra B_\pm$ such that
\begin{eqnarray*}
H(s,1) &=& (f\circ\alpha)(s) \qquad\forall s \in D^1\\
H(\pm 1,t) &=& \alpha(\pm 1) \quad\qquad\forall t \in [0,1]\\
H^{-1}(B_0) &=& \bdry D^1 \times [0,1] \;\cup\; D^1 \times \{0\}.
\end{eqnarray*}
By the pasting lemma, we can extend $g$ from $O$ to $O \cup M$ by
\[
g: M \longra B_\pm ~;~ \phi(s,x) \longmapsto H(s,3\|x\|-1).
\]

Next, there exist both a neighborhood $C$ of the attaching 0-sphere $\alpha(\bdry D^1)$ in $X_\mp$ and a diffeomorphism $\psi: \bdry D^1 \times D^4_- \longra X_\mp$ such that $\psi|{\bdry D^1 \times D^3_0} = \phi|{\bdry D^1 \times D^3}$.
Extend $g$ from $\Fr_{X_0} T = \phi(\bdry D^1 \times \bdry D^3)$ to $\Fr_{X_\mp} C = \psi(\bdry D^1 \times \bdry_- D^4_-)$ by $g=f$.
Then $g$ is defined on the `riveted' 3-sphere $S:= \Fr_X(C\cup I)$.
Since $g(S) \subset B_\mp$ and $\pi_3(B_\mp)=0$, we may extend $g$ to the `riveted' 4-disc $C \cup I$;
using the collar of $B_0$ in $B_\mp$, we can guarantee that $g(C\cup I - S) \subset B_\mp - B_0$.
Lastly, extend $g$ to the complement $X-(C \cup T)$ by $g=f$. 
Therefore $g: X \longra B$ is transverse to $B_0$ with
\[
g^{-1}(B_0) ~=~ (X_0 - I) \;\cup\; (M \cap I).
\]

Finally, note $\Fr_X(C \cup T)$ is a 3-sphere in $X$, so we obtain a 4-sphere $X \times [0,1]$:
\[
\Sigma ~:=~ (C \cup T) \times \{0\} \;\cup\; \Fr_X(C \cup T) \times [0,1] \;\cup\; (C \cup T) \times \{1\}.
\]
Since $\pi_4(B)=0$, we may fill in $(f \circ \proj_X)|\Sigma$ to obtain a homotopy from $f$ to $g$.
\end{proof}

We adapt to dimension 4, and simplify, `arc-chasing' arguments of \cite[p67]{Hempel} and \cite[p88]{Cappell}.
Further, we generalize the sliding of 1-handles trick of Proof~\ref{lem:LWmaps}.

\begin{proof}[Proof of Proposition~\ref{prop:epi}]
By Theorem~\ref{thm:Bowditch} and by Lemma~\ref{lem:loops} with respect to the model \eqref{eqn:dmcm}, we homtope $f$ so that there are disjointly embedded 1-handlebodies $\Lambda_\pm \diffeo \# r_\pm (S^1 \times D^3) \subset X$ satisfying $f(\Lambda_\pm) \subset BG_\pm - BG_0$ and $f_\#\pi_1(\Lambda_\pm,x_\pm) = G_\pm$.
Hence $f(\Lambda_- \sqcup \Lambda_+)$ is disjoint from the bicollar neighborhood $BG_0 \times [-1,1]$ in $BG$.
Next, by Lemma~\ref{lem:transverse}, we further re-choose $f$ up to homotopy relative to $\Lambda_- \sqcup \Lambda_+$ so that $f$ is also transverse to $BG_0 \times \{0\}$, say with $f$-preimage $K$.
Write $V_\pm$ for the $X$-closure of the path-component neighborhood of $\Lambda_\pm$ in the open subset $X-K$.

Assume that $V_+ \cap K$ has at least two components, say $K_0 \sqcup K_1$.
Since $V_+$ is connected, there is a properly and smoothly embedded arc $\alpha: [0,1] \longra V_+$ satisfying: $\alpha(i) \in K_i$ if $0 \leqslant i \leqslant 1$, $\alpha(\frac{1}{2})$ is near-but-not $x_+$, and $\alpha^{-1}(\mathring{V}_+) = (0,1)$. 
Since the composite map $\pi_1(\Lambda_+) \xrightarrow{~f_\#~} \pi_1(BG_+) \longra \pi_1(BG_+,BG_0)$ is surjective, upon midpoint-concatenation of some based loop $(S^1,1) \longra (\bdry\Lambda_+,\alpha(\frac{1}{2}))$, we may assume that $[f \circ \alpha] = 0 \in \pi_1(BG_+,BG_0)$.
Then, by Lemma~\ref{lem:handleexchange}, we re-choose $f$ up to homotopy relative $\Lambda_- \sqcup \Lambda_+$ so that the new component neighborhood $V_+$ of $\Lambda_+$ in $X-f^{-1}(BG_0)$ contains $K_0 \# K_1$.
Since $X$ is compact, so is $K$, so we repeat finitely many steps until $V_+ \cap K$ becomes connected.
Similarly, make $V_- \cap K$ connected.

Write $L := V_- \cap V_+$, a connected 3-submanifold of $X$.
Let $x_0 \in L$.
Assume there exists $x_1 \in K-L$.
Since $X$ is connected, there exists a path $\gamma: [0,1] \longra X$ from $x_0$ to $x_1$.
Define $s_0 := \sup \gamma^{-1}(V_- \cup V_+)$.
Since $0 < s_0 < 1$, we must have $\gamma(s_0) \in \Fr_X(V_-) \cup \Fr_X(V_+)=L$.
Then $\gamma(s_0)$ is in the interior of $V_- \cup V_+$.
So there exists $s_1>s_0$ with $\gamma(s_1)$ also in the interior of $V_- \cup V_+$.
This contradicts the maximality of $s_0$.
Therefore $K-L$ is empty.
Hence $K=L$ and so it is connected.

Finally, since $G_0 \subset G_+$ is finitely generated and since $(f|\Lambda_+)_\# : \pi_1(\Lambda_+) \longra G_+$ is surjective, there exist based loops $\delta_1, \ldots, \delta_{r_0} : (S^1, 1) \longra (V_+,x_0)$ such that $G_0 = \gens{f_\#[\delta_1], \ldots, f_\#[\delta_{r_0}]}$.
In particular, each $f\circ\delta_i$ is based homotopic into $BG_0$.
Since each $f \circ \delta_i$ represents $0$ in $\pi_1(BG_+,BG_0)$, by Lemma~\ref{lem:handleexchange} applied $r_0$ times, we re-choose $f$ so that, further, its restriction to $M := f^{-1}(BG_0)$ is $\pi_1$-surjective.
\end{proof}

\section{Proofs of the embedding theorems}

\begin{proof}[Proof of Theorem~\ref{thm:main}]
Clearly \eqref{eqn:main} is a necessary condition.
So now, assume \eqref{eqn:main}.

Consider the double mapping cylinder model \eqref{eqn:dmcm}.
Since $G_-,G_0,G_+$ are finitely generated and $c: X \longra BG$ classifies a regular cover, by Proposition~\ref{prop:epi}, we may re-choose $c$ up to homotopy so that: $c$ is transverse to $BG_0$, the 3-submanifold preimage $M$ is connected, and the restriction $c_0: M \longra BG_0$ classifies a regular cover.
Write $X = X_- \cup_{M} X_+$ and $c = c_- \cup_{c_0} c_+$ with restrictions $c_\pm : X_\pm \longra BG_\pm$.

Next, consider the commutative square, with horizontal maps being connecting homomorphisms induced from \eqref{eqn:dmcm} and with vertical maps being of Hurewicz type:
\[\xymatrix{
\Omega^{SO}_4(BG) \ar[r]^-{\bdry} \ar[d] & \Omega^{SO}_3(BG_0) \ar[d]\\
H_4(BG) \ar[r]^-{\bdry} & H_3(BG_0)
}\qquad;\qquad
\xymatrix{
[X,c] \ar@{|->}[r] \ar@{|->}[d] & [M,c_0] \ar@{|->}[d]\\
c_*[X] \ar@{|->}[r] & c_{0*}[M].
}\]
Hence the criterion \eqref{eqn:main} implies: there is a classifying map $d: X_0 \longra BG_0$ with
\[
d_*[X_0] ~=~ c_{0*}[M] ~\in~ H_3(G_0).
\]
Since $d_\#$ and $c_{0\#}$ are surjective, by Lemma~\ref{lem:LWmaps}, there is a 4-dimensional oriented smooth bordism $e: V \longra BG_0$ from $(M,c_0)$ to $(X_0,d)$ made with only 2-handles.
Since $V$ is obtained from $X_0$ using only 2-handles, the inclusion $j: X_0 \longra V$ induces an epimorphism on fundamental groups.
Since $d_\# = e_\# \circ j_\#$ is a monomorphism, note that $j_\#$ is also a monomorphism.
So both $j_\#$ and $e_\#$ are isomorphisms.

Now, we obtain a connected 5-dimensional oriented compact smooth cobordism
\[
T ~:=~ X \times [0,1] \;\cup_{M \times [-1,1]}\; V \times [-1,1]
\]
where we regard $M \times [-1,1]$ in $X \times \{1\}$ and we smooth the corners at $M \times \{-1,1\}$.
The resultant 4-manifold and map are $X' := \bdry T - X \times \{0\}$ and $c' := D |_{X'}$, where
\[
D: T \longra BG ~;~
\begin{cases}
(x,t) \in X \times [0,1] &\longmapsto c(x)\\
(v,s) \in V \times [-1,1] &\longmapsto (e(v),s).
\end{cases}
\]
Decompose the space $X' = X'_- \cup_{X_0} X'_+$ with $X'_\pm = X_\pm \cup_{M} V \times \{\pm 1\} \cup X_0 \times [\pm 1, 0]$, as well as the map $c' = c'_- \cup_{d} c'_+: X' \longra BG$ with $c'_\pm = c_\pm \cup_{c_0} e: X'_\pm \longra BG_\pm$.

Write $i: M \longra X$ for the inclusion.
Since $V$ is the trace of a surgery on a framed oriented link $L$ in $M$, correspondingly note $T$ is the trace of a surgery on $i \circ L$ in $X$.
By a similar argument as earlier, we find that the kernel of $c_{0\#}$ equals the kernel of the map induced by the inclusion $M \longra V$, which is generated by the (unbased) components $L_k$ of $L$, upon anchoring them to the basepoint with choices of connecting paths.
In addition, since $c_{0\#} = c_\# \circ i_\#$ and $c_\#$ is an isomorphism, the kernel of $c_{0\#}$ equals the kernel of $i_\#$.
In particular, each embedded circle $L_k$ is nulhomotopic in $X$, bounding an immersed disc with tranverse double points, which can be isotoped away using finger-moves \cite[1.5]{FQ}; thus each $L_k$ bounds an embedded disc in $X$.
Another consequence is that $D \simeq c \cup_{c_0} e: X \cup_M V \longra BG$ induces an isomorphism on fundamental groups.
Then, by an argument with alternating words, each $c'_\pm: X'_\pm \longra BG_\pm$ also does so.
So, since $d_\#$ is an isomorphism, $c'$ induces an isomorphism on fundamental groups.

Finally, we show that the embedding solution $X'$ is bistably diffeomorphic to $X$.
For each $L_k$, consider embedded in $\mathring{T}$ the 2-sphere $S_k$ with equator $L_k$, with northern hemisphere the core of the bounding 2-handle in $V \times \{0\}$, and with southern hemisphere the bounding 2-disc in $X \times \{1\}$.
Write $N_k$ for the 5-dimensional closed-tubular neighborhood of $S_k$ in $\mathring{T}$.
Observe that $T$ is diffeomorphic to the boundary-connected sum $(X \times [0,1]) \natural (\bigsqcup_k N_k)$.
Each $N_k$ is diffeomorphic to either $D^3 \times S^2$ or $D^3 \rtimes S^2$, where the latter is the nontrivial (nonspin) disc bundle.
Thus, we obtain $X' \diffeo X \# p(S^2 \times S^2) \# q(S^2 \rtimes S^2)$ for some $p \geqslant r$ and $q \geqslant 0$.
Since $(S^2 \times S^2) \# (\C P_2) \diffeo 2 (\C P_2) \# (\ol{\C P_2})$ and $S^2 \rtimes S^2 \diffeo (\C P_2) \# (\ol{\C P_2})$ \cite[C1, L1]{Wall},
\[
X'(1,0) ~\diffeo~ X(1+p+q,p+q).\qedhere
\]
\end{proof}

\begin{proof}[Proof of Theorem~\ref{thm:totallynonspin}]
Since $w_2(\wt{X}) \neq 0$, by the Hurewicz theorem, there exists a spherical class $\wt{\alpha}: S^2 \longra \wt{X}$ such that $\Kronecker{w_2(\wt{X})}{\wt{\alpha}_*[S^2]} \neq 0$.
Write $p: \wt{X} \longra X$ for the covering map, and write $\alpha := p \circ \wt{\alpha}: S^2 \longra X$.
Since on tangent bundles $T\wt{X} = p^*(TX)$, as one obtains the smooth structure on $\wt{X}$ by even-covering, note
\[
\Kronecker{w_2(X)}{\alpha_*[S^2]} = \Kronecker{w_2(X)}{p_*\wt{\alpha}_*[S^2]} = \Kronecker{p^* w_2(X)}{\wt{\alpha}_*[S^2]} = \Kronecker{w_2(\wt{X})}{\wt{\alpha}_*[S^2]} = 1.
\]

Do the same as in the proof of Theorem~\ref{thm:main}, until the construction of the 2-sphere $S_k$.
In the case that the normal bundle of $S_k$ is nontrivial, replace the southern hemisphere with its one-point union with $\alpha$, smoothed rel $L_k$ into immersion then an embedding by finger-moves, to obtain $S_k'$.
Since $[S_k'] = [S_k] + [\alpha] \in \pi_2(X)$, note
\[
\Kronecker{w_2(X)}{S'_{k*}[S^2]} = \Kronecker{w_2(X)}{S_{k*}[S^2] + \alpha_*[S^2]} = 1 + 1 = 0 ~\in~ \Z/2.
\]
Hence the normal 3-plane bundle of the new embedded 2-sphere $S_k'$ in $\mathring{T}$ is trivial.
So $T$ is diffeomorphic to the boundary-connected sum $(X \times [0,1]) \natural (\bigsqcup_{k=1}^r D^3 \times S^2)$.
Therefore, we obtain $X'$ is diffeomorphic to $X(r) = X \# r(S^2 \times S^2)$.
\end{proof}

\begin{proof}[Proof of Theorem~\ref{thm:spin}]
Do the same as in the proof of Theorem~\ref{thm:main}, except using \eqref{eqn:spin} and Lemma~\ref{lem:LWmaps_spin} instead of \eqref{eqn:main} and Lemma~\ref{lem:LWmaps}, until the construction of the 2-sphere $S_k$.
Here, the spin structure $s_M$ on $M$ is the restriction of the spin structure $s$ on $X \times \{1\}$, where the spin structure on the normal line bundle is induced from its pullback orientation \cite[II:2.15]{LM}.
Since $s_M$ is the restriction of the spin structure on $V \times \{0\}$, we obtain that $T$ has an induced spin structure.

Then, since $w_2(T)=0$, the normal 3-plane bundle of each $S_k$ in $\mathring{T}$ is trivial.
So $T$ is diffeomorphic to the boundary-connected sum $(X \times [0,1]) \natural (\bigsqcup_{k=1}^r D^3 \times S^2)$.
Therefore, we obtain $X'$ is diffeomorphic to $X(r) = X \# r(S^2 \times S^2)$.
\end{proof}

For clarity, we repeat the following proof from \cite[p258]{KLT} and \cite[p713]{Kreck}.
The statement shall be applied in Proof~\ref{thm:prespin} for manifolds $Y$ of dimensions 3, 4, 5.

\begin{proof}[Proof of Lemma~\ref{lem:prespin}]
Since $\wt{Y}$ is 1-connected, by the Leray--Serre spectral sequence for the homotopy fibration sequence $\wt{Y} \xrightarrow{~p~} Y \xrightarrow{~u~} B\G$, we obtain an exact sequence
\begin{equation}\label{eqn:Kreck}
\xymatrix{
0 \ar[r] & H^2(B\G;\Z/2) \ar[r]^-{u^*} & H^2(Y;\Z/2) \ar[r]^-{p^*} & H^2(\wt{Y};\Z/2)^G.
}
\end{equation}
Then, since $w_2(T\wt{Y})=w_2(p^*(TY)) = p^*(w_2(TY))$, the oriented smooth manifold $\wt{Y}$ admits a spin structure if and only if there exists $w_2^u \in H^2(B\G;\Z/2)$ such that $u^*(w_2^u)=w_2(TY)$.
Further by exactness, this class $w_2^u$ is unique if it exists.
\end{proof}

For $r \geqslant 0$, the \emph{pinch map} $p: X(r) \longra X \vee \# r(S^2 \times S^2)$ gives a degree-one map
\[
k := (\id \vee \const) \circ p : X(r) \longra X.
\]
The $\pi_1$-isomorphism $c: X \longra BG$ induces the $\pi_1$-isomorphism $c \circ k: X(r) \longra BG$.

\begin{proof}[Proof of Theorem~\ref{thm:prespin}: necessity of \eqref{eqn:prespin}]
Assume for some $r \geqslant 0$ that there exists an incompressible embedding $j_0: X_0 \longra X(r)$ such that $(c \circ k \circ j_0)_\#(\pi_1 X_0) = G_0$.
Then $X(r) = X_-' \cup_{X_0} X_+'$ with inclusions $j_\pm : X_\pm' \longra X(r)$.
Since $X(r)$ and $X_0$ are connected, so are $X_\pm'$.
Furthermore, since $(c \circ k)_\#$ and $(c \circ k \circ j_0)_\#$ are isomorphisms, by a basic observation on normal form \cite{Smith_dissertation}, so are $(c \circ k \circ j_\pm)_\#: \pi_1(X_\pm') \longra G_\pm$.

Consider the double mapping cylinder model \eqref{eqn:dmcm} of $BG$, where $Bi_0: BG_0 \longra BG$ is the inclusion of a bicollared subspace.
Since $X_0$ is a CW-complex, there is a homotopically unique map $d: X_0 \longra BG_0$ such that $Bi_0 \circ d \simeq c \circ k \circ j_0$.
Furthermore, since $X_\pm$ are CW-complexes, $d$ extends to maps $c_\pm: X_\pm' \longra BG_\pm \cup BG_0 \times [0, \pm 1]$ with $Bi_\pm \circ c_\pm \simeq c \circ k \circ j_\pm$.
Therefore, $c\circ k$ is homotopic to a $BG_0$-transverse map $c' := c_-' \cup_d c_+' : X(r) \longra BG$ satisfying $(c')^{-1}(BG_0 \times \{0\}) = X_0$.

Next, since $\wt{X}$ admits a spin structure, by Lemma~\ref{lem:prespin}, there is a unique class $w_2^c \in H^2(BG;\Z/2)$ such that $w_2(TX) = c^*(w_2^c)$.
Since $S^2$ is stably parallelizable, so is $S^2 \times S^2$.
Then the tangent bundle $TX(r)$ is stably isomorphic to the pullback $k^*TX$. (The corresponding statement is false for a bistabilization $X(a,b)$ unless $a=0=b$.)
Hence $w_2(TX(r))=k^*w_2(TX)$.
Note
\begin{eqnarray*}
d^*(i_0^* w_2^c) &=& (i_0 \circ d)^*(w_2^c) ~=~ (c \circ k \circ j_0)^*(w_2^c) ~=~ j_0^* k^*(c^* w_2^c)\\
&=& j_0^* k^*(w_2(TX)) ~=~ j_0^* w_2(TX(r)) ~=~ w_2(TX_0) ~=~ v_2(X_0) ~=~ 0,
\end{eqnarray*}
with $v_2 = w_2 + w_1^2$ the second Wu class \cite[11.14]{MS} and $\Sq^2=0$ on $H^1(X_0;\Z/2)$.
The exact sequence \eqref{eqn:Kreck} holds analogously for $X_0$, so $\Ker(d^*)=0$ hence $i_0^*(w_2^c)=0$.
Thus, there is a nulhomotopy $\theta$ of $w_2^c \circ Bi_0$.
By Lemma~\ref{lem:transverse}, we may assume $c$ is transverse to $BG_0$ in model \eqref{eqn:dmcm}, with 3-submanifold $M := c^{-1}(BG_0 \times \{0\})$ of $X$.

Now, $k: X(r) \longra X$ extends to a retraction $K: X[r] \simeq X \vee r S^2 \longra X$, where $X[r] := (X \times [0,1]) \;\natural\; r(D^3 \times S^2)$ is the canonical cobordism from $X$ to $X(r)$.
Since $c$ is transverse to $BG_0$, so is $c \circ K$.
Recall there is homotopy $H: X(r) \times [1,2] \longra BG$ such that $H(-,1)=c \circ k$ and $H(-,2)=c'$.
These unite to a $B_0$-transverse map
\[
C ~:=~ (c\circ K) \cup_{c \circ k} H ~:~ W := X[r] \cup (X(r) \times [1,2]) \longra BG.
\]
The preimage 4-manifold $V := C^{-1}(BG_0 \times \{0\})$ fits into an oriented bordism $(V,C|V)$ from $(M,c|M)$ to $(X_0,d)$.
Furthermore, this enhances to a spin bordism, as Definition~\ref{defn:inducedspin} produces a spin structure $s_\mu^\theta$ on $V$ defined by the formula
\[
s_\mu^\theta : V \longra B\Spin = \hofib(w_2) ~;~ x \longmapsto \left( \tau_V(x), \mu^x * \theta^{C(x)} \right),
\]
with $\mu: W \times [0,1] \longra K(\Z/2,2)$ a homotopy from $w_2 \circ \tau_W$ to $w_2^C \circ C$.
Indeed, $w_2^C$ exists by Lemma~\ref{lem:prespin}, since $\wt{W}$ has a spin structure as $T\wt{W} \iso \wt{K}^* T\wt{X} \oplus \ul{\R}$.
Define $\eta: X \times [0,1] \longra K(\Z/2,2)$ as a restriction of $\mu$.
So $s_\mu^\theta$ on $V$ restricts to spin structures $s_\eta^\theta$ on $M$ and $t := s_\mu^\theta | X_0$ on $X_0$.
Therefore, Equation~\eqref{eqn:prespin} holds.
\end{proof}

Recall the \textbf{homotopy fiber} of a map $f: A \longra B$ with respect to $b_0 \in B$ is
\[
\hofib(f) ~:=~ \{ (a,p) \in A \times B^{[0,1]} \;\mid\; p(0)=f(a) \text{ and } p(1)=b_0 \}.
\]

\begin{proof}[Proof of Theorem~\ref{thm:prespin}: sufficiency of \eqref{eqn:prespin}]
Assume Equation~\eqref{eqn:prespin} holds, where the transverse 3-submanifold $M := c^{-1}(BG_0)$ of $X$ exists by Lemma~\ref{lem:transverse}, upon altering $c$ by a homotopy.
Furthermore, by Proposition~\ref{prop:epi}, we can further homotope $c$ so that $M$ is connected and its restriction $c_0: M \longra BG_0$ is a $\pi_1$-epimorphism.
Then the spin bordism $(V,\Sigma,e)$ from $(M,s_\eta^\theta,c|M)$ to $(X_0,t,d)$, by Lemma~\ref{lem:LWmaps_spin}, can be assumed to only have 2-handles relative to $X_0$.
From the proof of Theorem~\ref{thm:main}, $e: V \longra BG_0$ is a $\pi_1$-isomorphism, and the map $D \simeq c \cup_{c_0} e: T \longra BG$ is also.

Observe that the spin structure $\Sigma: V \longra B\Spin=\hofib(w_2)$ is of the form
\[
\Sigma ~=~ \left( \tau_V: V \longra BSO, \; \sigma: V \longra K(\Z/2,2)^{[0,1]} \right),
\]
with $\sigma(x) \in K(\Z/2,2)^{[0,1]}$ a path from $w_2(\tau_V(x))$ to the basepoint $\omega$ of $K(\Z/2,2)$.
Recall that $\theta : BG_0 \times [0,1] \longra K(\Z/2,2)$ is a homotopy from $w_2^c \circ Bi_0$ to $\const_\omega$.
Then define a homotopy $\xi: V \times [0,1] \longra K(\Z/2,2)$ from $w_2 \circ \tau_V$ to $w_2^c \circ Bi_0 \circ e$ by
\[
\xi^x ~:=~ \sigma(x) * \overline{\theta^{e(x)}}.
\]
Recall that $\eta: X \times [0,1] \longra K(\Z/2,2)$ is a homotopy from $w_2 \circ \tau_X$ to $w_2^c \circ c$.
This restricts to a homotopy $\eta_0: M \times [0,1] \longra K(\Z/2,2)$ from $w_2 \circ \tau_M$ to $w_2^c \circ Bi_0 \circ c_0$.
Note $\xi$ extends $\eta_0$, since $\tau_V$ extends $\tau_M$ and $e$ extends $c_0$.
Thus, since $T \simeq X \cup_M V$, we obtain a homotopy $\eta \cup_{\eta_0} \xi$ from $w_2 \circ \tau_T$ to $w_2^c \circ D$.
Since $D$ classifies the universal cover of the 5-manifold $T$, by Lemma~\ref{lem:prespin}, the universal cover $\wt{T}$ has a spin structure.

Consider the embedded 2-spheres $S_k : S^2 \longra \mathring{T}$, in the proof of Theorem~\ref{thm:main}.
Write $P: \wt{T} \longra T$ for the universal covering map.
As $S^2$ is simply connected, by the lifting theorem, there is an embedding $\wt{S_k}: S^2 \longra \wt{T}$ with $S_k = P \circ \wt{S_k}$.
Note
\[
\Kronecker{w_2 T}{S_{k*}[S^2]} = \Kronecker{w_2 T}{P_* \wt{S_k}_*[S^2]} = \Kronecker{P^* (w_2 T)}{\wt{S_k}_*[S^2]} = \Kronecker{w_2 \wt{T}}{\wt{S_k}_*[S^2]} = 0.
\]
Then, although $T$ need not be spin, nonetheless the normal 3-plane bundle of each $S_k$ in $\mathring{T}$ is trivial.
So $T$ is diffeomorphic to the boundary-connected sum $(X \times [0,1]) \natural (\bigsqcup_{k=1}^r D^3 \times S^2)$.
Therefore $X'$ is diffeomorphic to $X(r) = X \# r(S^2 \times S^2)$.
\end{proof}

A final remark on \eqref{eqn:main} is that $H_3(G_0)=\Z$ if $X_0$ is irreducible with infinite fundamental group, as $X_0$ models $BG_0$, a consequence of the sphere theorem \cite{Hempel}.

\subsection*{Acknowledgements}

The second author is a doctoral student of the first author and thanks him for equal involvement on this stable existence project.
His dissertation solves the stable uniqueness problem, using instead the power of Kreck's modified surgery machine \cite{Smith_dissertation}.
The first author is grateful to his low-dimensional topology professors at U Illinois--Chicago, Louis Kauffman and Peter Shalen, who instilled in him the beauty of knots and of incompressible surfaces in 3-manifolds.
This project is a `boyhood dream' to see if that philosophy works up one dimension.

\bibliographystyle{alpha}
\bibliography{StableExistence_Cutting4manifolds}

\begin{thebibliography}{KLT95b}

\bibitem[Bow99]{Bowditch}
Brian~H Bowditch.
\newblock Connectedness properties of limit sets.
\newblock {\em Trans. Amer. Math. Soc.}, 351(9):3673--3686, 1999.

\bibitem[Bre97]{Bredon}
Glen~E Bredon.
\newblock {\em Topology and geometry}, volume 139 of {\em Graduate Texts in
  Mathematics}.
\newblock Springer-Verlag, New York, 1997.
\newblock Corrected third printing of the 1993 original.

\bibitem[Bro72]{Browder}
William Browder.
\newblock {\em Surgery on simply-connected manifolds}.
\newblock Springer-Verlag, New York-Heidelberg, 1972.
\newblock Ergebnisse der Mathematik und ihrer Grenzgebiete, Band 65.

\bibitem[Bro94]{Brown}
Kenneth~S Brown.
\newblock {\em Cohomology of groups}, volume~87 of {\em Graduate Texts in
  Mathematics}.
\newblock Springer-Verlag, New York, 1994.
\newblock Corrected reprint of the 1982 original.

\bibitem[BS90]{BS}
Gilbert Baumslag and Peter Shalen.
\newblock Amalgamated products and finitely presented groups.
\newblock {\em Comment. Math. Helv.}, 65(2):243--254, 1990.

\bibitem[Cap76]{Cappell}
Sylvain Cappell.
\newblock A splitting theorem for manifolds.
\newblock {\em Invent. Math.}, 33(2):69--170, 1976.

\bibitem[CF64]{CF}
P~E Conner and E~E Floyd.
\newblock {\em Differentiable periodic maps}.
\newblock Ergebnisse der Mathematik und ihrer Grenzgebiete, N. F., Band 33.
  Academic Press Inc., Publishers, New York; Springer-Verlag,
  Berlin-G\"ottingen-Heidelberg, 1964.

\bibitem[CS71]{CS}
Sylvain Cappell and Julius Shaneson.
\newblock On four dimensional surgery and applications.
\newblock {\em Comment. Math. Helv.}, 46:500--528, 1971.

\bibitem[Feu73]{Feustel}
Charles~D Feustel.
\newblock A generalization of {K}neser's conjecture.
\newblock {\em Pacific J. Math.}, 46:123--130, 1973.

\bibitem[FQ90]{FQ}
Michael~H Freedman and Frank Quinn.
\newblock {\em Topology of 4-manifolds}, volume~39 of {\em Princeton
  Mathematical Series}.
\newblock Princeton University Press, Princeton, NJ, 1990.

\bibitem[Hem76]{Hempel}
John Hempel.
\newblock {\em {$3$}-{M}anifolds}.
\newblock Princeton University Press, Princeton, NJ; University of Tokyo Press,
  Tokyo, 1976.
\newblock Ann. of Math. Studies, No. 86.

\bibitem[Hil95]{Hillman}
Jonathan~A Hillman.
\newblock Free products and {$4$}-dimensional connected sums.
\newblock {\em Bull. London Math. Soc.}, 27(4):387--391, 1995.

\bibitem[HK93]{HK}
Ian Hambleton and Matthias Kreck.
\newblock Cancellation of hyperbolic forms and topological four-manifolds.
\newblock {\em J. Reine Angew. Math.}, 443:21--47, 1993.

\bibitem[Kha17]{Khan}
Qayum Khan.
\newblock Cancellation for 4-manifolds with virtually abelian fundamental
  group.
\newblock {\em Topology Appl.}, 220:14--30, 2017.

\bibitem[KLT95a]{KLT_counterexample}
Matthias Kreck, Wolfgang L{\"u}ck, and Peter Teichner.
\newblock Counterexamples to the {K}neser conjecture in dimension four.
\newblock {\em Comment. Math. Helv.}, 70(3):423--433, 1995.

\bibitem[KLT95b]{KLT}
Matthias Kreck, Wolfgang L{\"u}ck, and Peter Teichner.
\newblock Stable prime decompositions of four-manifolds.
\newblock In {\em Prospects in topology ({P}rinceton, {NJ}, 1994)}, volume 138
  of {\em Ann. of Math. Stud.}, pages 251--269. Princeton Univ. Press,
  Princeton, NJ, 1995.

\bibitem[KM63]{KM}
Michel~A Kervaire and John~W Milnor.
\newblock Groups of homotopy spheres.
\newblock {\em Ann. of Math. (2)}, 77:504--537, 1963.

\bibitem[Kre99]{Kreck}
Matthias Kreck.
\newblock Surgery and duality.
\newblock {\em Ann. of Math. (2)}, 149(3):707--754, 1999.

\bibitem[Lic62]{Lickorish}
W~B~R Lickorish.
\newblock A representation of orientable combinatorial {$3$}-manifolds.
\newblock {\em Ann. of Math. (2)}, 76:531--540, 1962.

\bibitem[LM89]{LM}
H~Blaine Lawson, II and Marie-Louise Michelsohn.
\newblock {\em Spin geometry}, volume~38 of {\em Princeton Mathematical
  Series}.
\newblock Princeton University Press, Princeton, NJ, 1989.

\bibitem[MS74]{MS}
John Milnor and James Stasheff.
\newblock {\em Characteristic classes}.
\newblock Princeton University Press, Princeton, NJ; University of Tokyo Press,
  Tokyo, 1974.
\newblock Annals of Mathematics Studies, No. 76.

\bibitem[RS72]{RS}
C~P Rourke and B~J Sanderson.
\newblock {\em Introduction to piecewise-linear topology}.
\newblock Springer-Verlag, New York-Heidelberg, 1972.
\newblock Ergebnisse der Mathematik und ihrer Grenzgebiete, Band 69.

\bibitem[Smi17]{Smith_dissertation}
Gerrit~N Smith.
\newblock {\em Realizing injective splittings of stable 4-manifolds}.
\newblock PhD thesis, Saint Louis University, 2017.

\bibitem[Sta71]{Stallings}
John Stallings.
\newblock {\em Group theory and three-dimensional manifolds}.
\newblock Yale University Press, New Haven, Conn.-London, 1971.
\newblock A James K. Whittemore Lecture in Mathematics given at Yale
  University, 1969, Yale Mathematical Monographs, 4.

\bibitem[Swa73]{Swarup}
G~Ananda Swarup.
\newblock Projective planes in irreducible {$3$}-manifolds.
\newblock {\em Math. Z.}, 132:305--317, 1973.

\bibitem[Tho54]{Thom}
Ren{\'e} Thom.
\newblock Quelques propri\'et\'es globales des vari\'et\'es diff\'erentiables.
\newblock {\em Comment. Math. Helv.}, 28:17--86, 1954.

\bibitem[Wal60]{Wallace}
Andrew~H Wallace.
\newblock Modifications and cobounding manifolds.
\newblock {\em Canad. J. Math.}, 12:503--528, 1960.

\bibitem[Wal64]{Wall}
C~T~C Wall.
\newblock Diffeomorphisms of {$4$}-manifolds.
\newblock {\em J. London Math. Soc.}, 39:131--140, 1964.

\end{thebibliography}

\end{document}